\newcommand{\bbN}{{\mathbb{N}}}
\newcommand{\bbR}{{\mathbb{R}}}
\newcommand{\bbZ}{{\mathbb{Z}}}
\newcommand{\bbC}{{\mathbb{C}}}
\newcommand{\bbT}{{\mathbb{T}}}
\newcommand{\cA}{{\mathcal{A}}}
\newcommand{\cB}{{\mathcal{B}}}
\newcommand{\cE}{{\mathcal{E}}}
\newcommand{\cF}{{\mathcal{F}}}
\newcommand{\cG}{{\mathcal{G}}}
\newcommand{\cL}{{\mathcal{L}}}
\newcommand{\cM}{{\mathcal{M}}}
\newcommand{\sE}{{\mathsf{E}}}
\newcommand{\fL}{{\mathfrak{L}}}
\newcommand{\fM}{{\mathfrak{M}}}
\newcommand{\fP}{{\mathfrak{P}}}
\newcommand{\fp}{{\mathfrak{p}}}
\newcommand{\e}{{\epsilon}}
\newcommand{\vk}{{\varkappa}}
\newcommand{\vp}{{\varphi}}
\newcommand{\tr}{\text{\rm tr}}
\newcommand{\us}{\underline{s}}
\newcommand{\hus}{\hat{\underline{s}}}
\allowdisplaybreaks \numberwithin{equation}{section}
\newtheorem{theorem}{Theorem}[section]
\newtheorem{lemma}[theorem]{Lemma}
\theoremstyle{definition}
\newtheorem{definition}[theorem]{Definition}
\newtheorem{remark}[theorem]{Remark}
\date{\today}
\title[On point spectrum of Jacobi matrices]{On point spectrum of Jacobi matrices generated by iterations of quadratic polynomials}
\keywords{Jacobi matrix, point spectrum, limit periodic operator, hull, Julia set, Ruelle operator}
\subjclass{47B36, 42C05, 37C30, 37F15}
\author[B.\ Eichinger]{Benjamin Eichinger}
\address{B. Eichinger: School of Mathematical Sciences, Lancaster University, Lancaster LA1 4YF, United Kingdom}
\email{b.eichinger@lancaster.ac.uk}
\thanks{B.\ E.\ was supported by the Austrian Science Fund FWF, project no: P33885}
\author[M.\ Luki\'c]{Milivoje Luki\'c}
\address{M. Luki\'c: Department of Mathematics, Rice University, Houston, TX~77005 and Department of Mathematics, Emory University, Atlanta, GA~30322, USA}
\email{milivoje.lukic@rice.edu}
\thanks{M. L.\ was supported in part by NSF grant DMS--2154563.}
\author[P.\ Yuditskii]{Peter Yuditskii}
\address{P. Yuditskii: Institut f{\"u}r Analysis,
Johannes Kepler Universit{\"a}t, 
Altenberger Strasse 69,
4040 Linz, Austria}
\email{peter.yuditskii@gmail.com}
\thanks{P.\ Y.\ was supported by the Austrian Science Fund FWF, project no: P34414}
\begin{document}

\begin{abstract}
In general, point spectrum of an almost periodic Jacobi matrix can depend on the element of the hull. In this paper, we study the hull of the limit-periodic Jacobi matrix corresponding to the equilibrium measure of the Julia set of the polynomial $z^2-\lambda$ with large enough $\lambda$; this is the leading model in inverse spectral theory of ergodic operators with zero measure spectrum.  We prove that every element of the hull has empty point spectrum.
\end{abstract}

\maketitle

\section{Introduction}

In an ergodic family of operators, ergodicity implies almost sure constancy of the spectrum, as well as a.c., s.c., p.p.\ spectrum \cite{PF}. Historically, the a.c.\ spectrum was first understood in the setting of periodic operators, and the pure point spectrum gained prominence in the context of Anderson localization. Singular continuous spectrum was originally perceived as an anomaly, but then discovered to be generic in many settings \cite{Simon,AvilaDamanik,BoshernitzanDamanik}. The study of spectral type of ergodic operators is now a vast area of research; it allows only a few general results, such as Kotani theory \cite{Kotani,Remling}, and a lot of research is centered on prominent models such as the Anderson model \cite{Anderson}, the Almost Mathieu operator \cite{JM}, and the Fibonacci Hamiltonian \cite{Damanik}; the dependence of spectral type on the ergodic space and the size and arithmetic properties of the parameters are of interest, and the analyses are highly model-dependent.

Let us focus on the setting of almost periodic Jacobi matrices, recalling that a Jacobi matrix
\[
(Ju)_n = a_{n} u_{n-1} + b_n u_n + a_{n+1} u_{n+1}
\]
is almost periodic if the set of its translates $S^n J S^{-n}$ is precompact in the set of bounded operators on $\ell^2(\bbZ)$; here $S$ denotes the shift on $\ell^2(\bbZ)$. The closure of the set of translates is called the hull of $J$. Equivalently, $J$ is almost periodic if there exist continuous maps $\cA, \cB : G \to \bbR$ on a compact Abelian group $G$ with $\cA \ge 0$ and $\omega, \theta \in G$ such that
\[
a_n = \cA(\theta + n \omega), \qquad b_n = \cB(\theta + n\omega).
\]
By passing to a subgroup, one reduces to the case when the orbit $\{\theta + n\omega \mid n\in\bbZ\}$ is dense in $G$ and obtains an ergodic family, with the hull in bijective correspondence to the phase $\theta \in G$. For almost periodic operators, some of the "almost sure" statements extend to "sure" statements:  the spectrum is constant, and by a result of Last--Simon \cite{LastSimon}, the absolutely continuous spectrum is constant. However, other spectral types are more sensitive to perturbations, and can be altered by changing the phase $\theta$.

For instance, the Almost Mathieu operator
\[
(J u)_n = u_{n-1} + 2 \lambda \cos(\theta + n\omega) u_n + u_{n+1},
\]
with irrational $\omega \in \bbT$ is an almost periodic operator, with hull indexed by $\theta \in \bbT$. In the subcritical case $\lambda <1$, it has a.c.\ spectrum, but in the supercritical case $\lambda > 1$, its spectrum is s.c.\ or p.p.\ depending on the phase  \cite{JitomirskayaSimon,Jitomirskaya99}. A remarkable result of Jitomirskaya \cite{Ji} proves that in the critical case $\lambda=1$, Almost Mathieu operator has empty p.p.\ spectrum for every choice of $\theta$, after partial results \cite{Avila}.

Jacobi matrices generated by iterations of quadratic polynomials are the leading model in inverse spectral theory. They have attracted much attention, providing almost periodic operators with zero Lebesgue measure spectrum; cf. \cite{BellBeMou82,BarnGerHarr82, BaGerHarr85, BelGerVolYu05, PeVolYu06}. In this paper, we prove absence of pure point spectrum in the hull for this model and view this as a counterpart of Jitomirskaya's theorem. 

To describe the model, start with an even quadratic polynomial
\[
T(z) = z^2 - \lambda, \qquad \lambda  > 2.
\]
Its Julia set
\[
\sE = \bbC \setminus \{ z \in \bbC \mid T^{\circ n}(z) \to \infty, n \to\infty\}
\]
is a subset of $\bbR$ of zero one-dimensional Lebesgue measure; we denote $T^{\circ n} = T \circ T^{\circ (n-1)}$.  Denote by $\mu_\sE$ the equilibrium measure for the set $\sE$. The measure $\mu_\sE$ generates orthogonal polynomials which obey the Jacobi recurrence relation
\[
x p_n(x) = a_n p_{n-1}(x) + a_{n+1} p_{n+1}(x)
\]
(note that $b_n = 0$ by symmetry), with $a_0 = 0$. For $\lambda > 3$, the coefficients $a_n$ form a limit periodic sequence:  for an arbitrary $\epsilon>0$ there exist $n_0$ such that 
$$
|a_{k+s2^n}-a_k|\le \epsilon, \quad \forall n\ge n_0
$$
uniformly in $k$ and $s$. Due to this property the sequence can be extended to a continuous function on the space of dyadic integers $\bbZ_2$
\begin{equation}\label{eqnZ2continuity}
a_\vk=\lim_{k_j\to \vk} a_{k_j},\quad k_j\to\vk\in \bbZ_2.
\end{equation}
(recall that $\bbZ_2$ is the completion of $\bbZ$ with respect to the metric $d(m,n) = 1/2^j$ if $m-n \in 2^j \bbZ \setminus 2^{j+1} \bbZ$). This extends the half-line Jacobi matrix associated to $\mu_\sE$ to a full-line limit-periodic Jacobi matrix $J_0$, and describes the hull of this matrix as $\{ J_\vk \mid \vk \in \bbZ_2 \}$ where
$$
J_\vk e_n=a_{\vk+n}e_{n-1}+a_{\vk+n+1}e_{n+1},
$$
and $\{e_n\}_{n\in\bbZ}$ is the standard basis in $\ell^2(\bbZ)$.

All $J_\varkappa$ have spectrum $\sE$, so they have purely singular spectrum. It was proved \cite{SY} that $J_0$ has no eigenvalues; this proof used the observation $a_0 = 0$ to decompose $J_0$ as a direct sum of half-line Jacobi matrices, and used Ruelle theory to study the other half-line matrix.  Since $a_\vk > 0$ for $\vk \neq 0$, $J_\vk$ has no such decomposition if $\vk \notin \bbZ$. Nonetheless, we prove the following:
\begin{theorem}\label{mt}
For $\lambda > 3$ and $\varkappa \in \bbZ_2$, the operator $J_\varkappa$ on $\ell^2(\bbZ)$ has no eigenvalues. It has purely singular continuous spectrum.
\end{theorem}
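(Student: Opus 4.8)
The plan is to build an exact renormalization realizing $J_\vk^2-\lambda$, on a sublattice, as another element of the hull, and then to run a descent--lifting argument on eigenvalues against the non-atomic density of states; throughout write $\Sigma_{pp}(J)$ for the set of eigenvalues of $J$. I would first record the renormalization equations. Since $\mu_\sE$ is the balanced (equilibrium) measure we have $T_*\mu_\sE=\mu_\sE$, so $\{p_n\circ T\}$ is again orthonormal, and comparing degrees gives $p_{2n}=p_n\circ T$. Substituting $T(z)=z^2-\lambda$ into the three-term recurrence and matching the coefficients of $p_{2n\pm2},p_{2n}$ yields
\[
a_{2n}^2+a_{2n+1}^2=\lambda,\qquad a_{2n}a_{2n-1}=a_n,
\]
which extend to $\bbZ_2$ by \eqref{eqnZ2continuity}. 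Because nearest-neighbour coupling reverses parity, with respect to the splitting of $\ell^2(\bbZ)$ along $L_\vk=\{n:\vk+n\in 2\bbZ_2\}$ and its complement one has $J_\vk=\left(\begin{smallmatrix}0&B^{*}\\ B&0\end{smallmatrix}\right)$; the first identity forces the diagonal of $B^{*}B-\lambda$ to vanish, and the second, after the canonical relabelling $L_\vk\cong\bbZ$, identifies
\[
B^{*}B-\lambda=J_{\sigma(\vk)},
\]
where $\sigma:\bbZ_2\to\bbZ_2$ is the dyadic right shift $\sigma(\epsilon+2\vk')=\vk'$, $\epsilon\in\{0,1\}$. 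The complementary block $BB^{*}-\lambda$ has nonzero diagonal and is discarded.

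Next I would transfer eigenvalues along $\sigma$. If $J_\vk\psi=E\psi$ with $0\ne\psi\in\ell^2(\bbZ)$ then $E\in\sE$; as $\sE$ is completely invariant and $\sE\subset(-\lambda,\lambda)$ for $\lambda>2$, the point $T(0)=-\lambda\notin\sE$ forces $0\notin\sE$, so $E\ne0$. Writing $\psi=(u,v)$ gives $B^{*}v=Eu$ and $Bu=Ev$; if $u=0$ then $v=0$, hence $u\ne0$ and $J_{\sigma(\vk)}u=T(E)u$ with $u\in\ell^2$. Iterating, $T^{\circ k}(E)\in\Sigma_{pp}(J_{\sigma^{\circ k}(\vk)})$ for all $k$. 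The same block picture runs backwards: given $J_{\vk'}u'=E'u'$ with $u'\in\ell^2$ (here $E'+\lambda>0$ since $\sE\subset(-\lambda,\lambda)$), for either $\vk\in\sigma^{-1}(\vk')=\{2\vk',2\vk'+1\}$ and $E=\sqrt{E'+\lambda}\ne0$ the vector $(u',E^{-1}Bu')$ is an $\ell^2$ eigenvector of $J_\vk$ at $E$; hence every $\vk$ in the fibre $\sigma^{-1}(\vk')$ acquires an eigenvalue lying in $T^{-1}(E')$.

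The contradiction then comes from a cylinder count. Suppose $J_{\vk_0}$ has an eigenvalue $E_0$. Fix $k\ge1$ and put $E_k=T^{\circ k}(E_0)\in\Sigma_{pp}(J_{\sigma^{\circ k}\vk_0})$. Lifting $k$ times through all $2^k$ digit choices shows that every $\tilde\vk$ in the cylinder $[\vk_0]_k=\{\tilde\vk:\tilde\vk\equiv\vk_0\ (\mathrm{mod}\ 2^k)\}$, a set of Haar measure $2^{-k}$, has an eigenvalue in the finite set $T^{-k}(E_k)$ of cardinality at most $2^k$. By pigeonhole some single $E^{\ast}\in T^{-k}(E_k)$ is an eigenvalue of $J_{\tilde\vk}$ for a positive-measure set of $\tilde\vk$. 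This contradicts the fixed-energy fact: the density of states of this model is the non-atomic measure $\mu_\sE$, so $\int_{\bbZ_2}\langle e_0,\mathbf 1_{\{E^{\ast}\}}(J_\vk)e_0\rangle\,d\vk=\mu_\sE(\{E^{\ast}\})=0$, whence $\langle e_0,\mathbf 1_{\{E^{\ast}\}}(J_\vk)e_0\rangle=0$ for a.e.\ $\vk$; combined with the covariance $J_{\vk+1}=S^{-1}J_\vk S$ and the fact that an $\ell^2$ eigenfunction cannot vanish at two consecutive sites, this gives $E^{\ast}\in\Sigma_{pp}(J_\vk)$ only on a null set. Absence of eigenvalues for every $\vk$ follows, and since $\sE$ has zero Lebesgue measure there is no absolutely continuous part, so the nonempty spectrum is purely singular continuous.

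I expect the main obstacle to be the \emph{exactness} of the decimation identity $B^{*}B-\lambda=J_{\sigma(\vk)}$: it is precisely the special structure of the equilibrium measure (the renormalization equations together with the $\bbZ_2$-continuity \eqref{eqnZ2continuity}) that makes the decimated operator land exactly in the hull, with the correct dyadic-shift phase and no error term, and both the descent and the lifting would collapse under any residual diagonal or off-diagonal defect. The parity bookkeeping on $\bbZ_2$ (tracking $L_\vk$ as $\vk$ ranges over the odd dyadic integers) and the identification of the density of states with $\mu_\sE$ are the remaining points that demand care.
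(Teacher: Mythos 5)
Your renormalization setup is sound and matches the backbone of the paper's argument: the relations $a_{2n}^2+a_{2n+1}^2=\lambda$ and $a_{2n}a_{2n-1}=a_n$ do give an exact decimation identity (the paper encodes it as the resolvent identity \eqref{eq:28dec-1}), and both the descent $E\mapsto T(E)$ of $\ell^2$ eigenvalues and the two-branch lifting via $(u',E^{-1}Bu')$ are correct (with the caveat that the relevant shift on the hull is the modified shift $\hus$ of Remark \ref{r:25may-1} rather than $\us$ --- harmless bookkeeping). The fatal step is the ``cylinder count.'' Lifting $k$ times from $\hus^{k}\vk_0$ produces eigenvalues only for the $2^k$ exact preimages $\hus^{-k}(\hus^{k}\vk_0)$, which form a \emph{finite} set containing exactly one point in each residue class mod $2^k$; this is not the cylinder $\{\tilde\vk\equiv\vk_0\ (\mathrm{mod}\ 2^k)\}$ of Haar measure $2^{-k}$. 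A general $\tilde\vk$ in that cylinder has $\hus^{k}\tilde\vk\ne\hus^{k}\vk_0$, and there is no reason for $J_{\hus^{k}\tilde\vk}$ to have the eigenvalue $E_k$: limit-periodic operators that merely agree in finitely many dyadic digits need not share any spectral data. Consequently the set of phases for which you have actually produced a common eigenvalue $E^{*}$ is finite for each $k$ and countable overall, hence of Haar measure zero; the pigeonhole step yields no positive-measure set, and the contradiction with the non-atomicity of the density of states never materializes. This is exactly the known obstruction: fixed-energy (Pastur-type) arguments exclude a given eigenvalue only for almost every phase, and the entire content of the theorem is to handle \emph{every} phase.

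For contrast, the paper also descends along $\hus$, but instead of trying to spread a putative eigenvalue over a positive-measure part of the hull it tracks the mass $\tr\bigl(h_n(T^{\circ n}x_0)\,\Sigma_{\hus^n\vk}(\{T^{\circ n}x_0\})\bigr)$ of the descended atom against matrix test functions $h_n=\fM_{n,\vk}h_0$ satisfying uniform two-sided bounds. In the generic case $\vk\in\cG_\delta$ it passes to a subsequence along which $\hus^n\vk\to 0$ or $1$ and invokes the known continuity of $\Sigma_0,\Sigma_1$; for $\vk\in\cF_N$ it shows the atom's mass grows geometrically under descent, because both preimages $\pm T^{\circ n}(x_0)$ carry comparable mass (the trace measure is even) and $c_-I\le h_n\le c_+I$ --- the lower bound being the hard analytic core of Section 4. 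Your proposal contains no substitute for either mechanism, so the gap is essential rather than cosmetic.
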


The key tool are matrix Ruelle operators, see Definition \ref{def:1}. A matrix generalization of Ruelle theory \cite{Bo} seems to be a hard problem, due to noncommutativity. For instance, whereas the multiplicative representation of Schur functions, after separation of a Blaschke product, is a simple observation, its matrix valued analog is the multiplicative theory of $J$-contractive matrices \cite{ArovDym}, which includes the de Branges theory of canonical systems \cite{deBranges}. However, we were able to reduce our problem to matrix Ruelle operators with special rational weights. This approach motivates a more systematic study of matrix Ruelle operators.

Dyadic integers have series expansions $\vk = \sum_{k=0}^\infty \e_k 2^k$, $\e_k \in \{0,1\}$, and in our analysis, it is necessary to separate the hull into elements for which the dyadic expansion contains stretches of zeros and ones of uniformly bounded length, $\cF_\sigma$, and its complement, $\cG_\delta$. Different arguments are used in the two cases.

It is an open problem whether the spectral measure corresponding to an element $\varkappa$ with periodic digits, such as $\varkappa =1+ \sum_{k=0}^\infty 2^{2k+1}$ (in $\bbZ_2$), is mutually singular to the spectral measure of $J_0$. The operator $J_\varkappa$ corresponds to a matrix Ruelle operator, and using Ruelle theory for analogy suggests that different weights should correspond to mutually singular measures. If so, this would be another example of operators in the same hull which are not unitarily equivalent, in strong contrast to the results of \cite{SY2} for reflectionless operators on regular Widom spectra with DCT property, where all operators in the same hull are unitarily equivalent to each other.

Besides the intrinsic interest in the spectral type, absence of eigenvalues is used in an upcoming preprint \cite{ELY} in the analysis of the scaling behavior of Christoffel--Darboux kernels corresponding to $\mu_\sE$.

\noindent \textbf{Structure of the paper:}  In Section 2 we introduce matrix Ruelle operators associated to the hull $\{J_\vk\}$. We divide the hull into elements for which $\vk$ contains stretches of zeros and ones of uniformly bounded length, $\cF_\sigma$, and its complement, $\cG_\delta$. In Section 3 we prove Theorem \ref{mt} in the generic case, i.e., $\vk\in \cG_\delta$. The case $\vk\in \cF_\sigma$ requires uniform boundedness from below of a certain sequence of \textit{matrix valued} test functions, which is proved in Section 4. With these bounds we conclude the proof of Theorem \ref{mt} in Section 4.

\noindent \textbf{Acknowledgment:} We thank David Damanik for helpful information. P.Y.  thanks  Paul F.X. M\"uller for useful discussions.

%%%%%%%%%%%%

\section{Hull $\{J_\vk\}$ and matrix Ruelle operators}

\subsection{Background: Ruelle operators and half-line Jacobi matrices} 
To a H\"older function $\varphi$ one associates the Ruelle operator
\[
(\cL_\vp f)(x)=\sum_{T(y)=x}e^{\vp(y)}f(y), \quad f\in C(\sE).
\]
Let $\rho=\rho(\varphi)$ be the spectral radius of $\cL_\vp$. According to the Ruelle version of the Perron-Frobenius theory, $\rho$ is an eigenvalue for the operator and its adjoint
\[
\frac 1{\rho} \cL_\vp h=h,\quad \frac 1{\rho} \cL^*_\vp \nu=\nu,
\]
with a positive continuous function $h\in C(\sE)$ and a (positive) measure $\nu$. With a slight abuse of notation, we will denote by $\cL_j$, $j=0,1,2$ the Ruelle operators
\[
(\cL_jf)(x) = \sum_{T(y)=x} \frac{f(y)}{T'(y)^j}.
\]
In particular, the equilibrium measure $\mu_\sE$ for the set $\sE$ is an eigenvector for $\cL_0^*$,
\begin{equation}\label{eq:1}
\int f(x) \mu_\sE(dx)=\frac 1 2\int (\cL_0f)(x) \mu_\sE(dx).
\end{equation}
We denote by $J_+$ the half-line Jacobi matrix corresponding to $\mu_\sE$.

The extension of $J_+$ to a full-line operator $J_0$ uses the continuity relation \eqref{eqnZ2continuity}. In particular, the extension of the sequence $(a_k)_{k \in \bbZ_+}$ to $(a_k)_{k\in \bbZ}$ is given by
\[
a_{-k}=\lim_{n\to\infty}a_{2^n-k}.
\]
Since $a_0=0$, this gives a direct sum decomposition $J_0= J_+ \oplus J_-$.
 We use $\bbZ_+ = \{n\in\bbZ \mid n\ge 0\}$ and $\bbZ_- = \bbZ \setminus \bbZ_+$. Note that all diagonal Jacobi coefficients are zero.

It was proved in \cite{SY} that the canonical spectral measure $\nu(dx)$ of $J_-$ is the eigenmeasure of the adjoint of the Ruelle operator
\[
\cL_2 f(x)=\sum_{T(y)=x}\frac{f(y)}{T'(y)^2},
\]
i.e.,
$\cL_2^*\nu=\rho_2 \nu$ and $\rho_2=\frac 1{2 a_{-1}^2}$. The corresponding orthogonal polynomials $p_n(x,\nu)$ possess the renormalization relation
\begin{equation}\label{eqnJminusrenorm}
p_{2n+1}(x,\nu)=\frac{x}{a_{-1}}p_n(T(x),\nu).
\end{equation}

\subsection{Shift in $\bbZ_2$ and matrix Ruelle operators associated to $J_\vk$}

We will use the operator $V$ on $\ell^2(\bbZ)$ given by $Ve_n = e_{2n}$, and denote by $V_\pm$ its  restrictions to $\ell^2(\bbZ_\pm)$.

\begin{lemma} 
For all $z \in \bbC \setminus \sE$,
\begin{equation}\label{eq:28dec-1}
V^*(J_{2\vk}-z)^{-1}V=\frac 1 2 T'(z)(J_\vk-T(z))^{-1}
\end{equation}
\end{lemma}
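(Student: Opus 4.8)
The plan is to exploit the self-similarity of the hull under the doubling map $\vk \mapsto 2\vk$, which at the operator level is encoded by an intertwining relation between $T(J_{2\vk})$ and $J_\vk$ through the isometry $V$. First I would record the elementary properties of $V$: it satisfies $V^*V = I$, its range is the even subspace $\cH_e = \overline{\operatorname{span}}\{e_{2n} : n \in \bbZ\}$, and $V^* e_{2n} = e_n$ while $V^* e_{2n+1} = 0$. Consequently, for any bounded $M$ the operator $V^* M V$ records exactly the even--even matrix entries $\langle e_{2m}, M e_{2n}\rangle$, i.e.\ the part of $M$ that preserves the parity of the index. (Since $\sE \subset \bbR$ is symmetric under $z \mapsto -z$ and completely invariant under $T$, all the resolvents below are defined for $z \in \bC \setminus \sE$.)

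Second, I would isolate this parity-preserving part of the resolvent using that $T$ is even, so that $z$ and $-z$ are the two $T$-preimages of $T(z)$. Since $J_{2\vk}$ has zero diagonal, even powers of $J_{2\vk}$ preserve parity while odd powers reverse it; hence the parity-preserving part of $(J_{2\vk}-z)^{-1}$ is a function of $J_{2\vk}^2$, and the resolvent identity gives
\begin{equation*}
\frac12\bigl[(J_{2\vk}-z)^{-1}-(J_{2\vk}+z)^{-1}\bigr] = z\,(J_{2\vk}^2 - z^2)^{-1} = z\,(T(J_{2\vk}) - T(z))^{-1}.
\end{equation*}
Because $V^*MV$ depends only on the parity-preserving part of $M$, this reduces the claim to
\begin{equation*}
V^*(J_{2\vk}-z)^{-1}V = z\,V^*(T(J_{2\vk}) - T(z))^{-1}V,
\end{equation*}
so, since $T'(z) = 2z$, it remains only to prove $V^*(T(J_{2\vk})-T(z))^{-1}V = (J_\vk - T(z))^{-1}$.

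Third --- and this is the crux --- I would establish the intertwining
\begin{equation*}
T(J_{2\vk})\,V = V\,J_\vk .
\end{equation*}
As $T(J_{2\vk}) = J_{2\vk}^2 - \lambda$ preserves $\cH_e$, it suffices to compute $V^* J_{2\vk}^2 V$ on the basis and match it with $J_\vk + \lambda$. Expanding $J_{2\vk}^2 e_{2n}$ shows this comes down to two renormalization identities for the coefficients, namely $a_{2m}^2 + a_{2m+1}^2 = \lambda$ (which produces the vanishing diagonal after subtracting $\lambda$) and $a_{2m-1}a_{2m} = a_m$ (which produces the off-diagonal entries $a_{\vk+n}$). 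These are the coefficient-level form of the self-similarity of $\mu_\sE$ under $T$, in the same spirit as the polynomial relation \eqref{eqnJminusrenorm}; I expect verifying them --- tracing them back to the invariance \eqref{eq:1} of $\mu_\sE$ under $\cL_0$ --- to be the main obstacle, while everything else is formal.

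Finally, once the intertwining holds, iterating gives $p(T(J_{2\vk}))V = V\,p(J_\vk)$ for every polynomial $p$, hence $f(T(J_{2\vk}))V = V\,f(J_\vk)$ for continuous $f$ by the spectral theorem, and in particular $(T(J_{2\vk})-T(z))^{-1}V = V(J_\vk - T(z))^{-1}$. Multiplying on the left by $V^*$ and using $V^*V = I$ yields $V^*(T(J_{2\vk})-T(z))^{-1}V = (J_\vk - T(z))^{-1}$, which combined with the second step gives $V^*(J_{2\vk}-z)^{-1}V = z(J_\vk-T(z))^{-1} = \frac12 T'(z)(J_\vk - T(z))^{-1}$, completing the proof.
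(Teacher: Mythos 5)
Your argument is correct, but it takes a genuinely different route from the paper's. The paper works on the spectral side: it conjugates $V_\pm$ to the composition operator $f\mapsto f\circ T$ (suitably weighted on the negative half-line) acting in $L^2$ of the corresponding spectral measure, derives the resolvent identity for $J_0=J_+\oplus J_-$ from the scalar partial-fraction identity $\tfrac12\sum_{T(y)=x}(y-z)^{-1}=\tfrac12 T'(z)/(x-T(z))$, and then reaches general $\vk$ by conjugating with powers of the shift (via $VS=S^2V$) and passing to the limit $k_s\to\vk$ in $\bbZ_2$. You instead stay entirely on the coefficient side of $\ell^2(\bbZ)$: the parity decomposition (valid because all diagonal entries vanish, so $UJ_{2\vk}U^*=-J_{2\vk}$ for $Ue_n=(-1)^ne_n$) reduces the claim to the intertwining $T(J_{2\vk})V=VJ_\vk$, which in turn reduces to the renormalization identities $a_{2m}^2+a_{2m+1}^2=\lambda$ and $a_{2m-1}a_{2m}=a_m$. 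These are exactly the relations the paper quotes from the literature in the proof of Lemma~\ref{l:3dec}, extended from $\bbZ_+$ to $\bbZ_2$ by the continuity \eqref{eqnZ2continuity}, so your reduction is legitimate; they are the coefficient-level expression of the same renormalization (the relation $p_{2n}(x,\mu_\sE)=p_n(T(x),\mu_\sE)$ and \eqref{eqnJminusrenorm}) that drives the paper's spectral-side computation. What your approach buys is a uniform treatment of all $\vk\in\bbZ_2$ with no direct-sum decomposition at $a_0=0$, no separate handling of the two half-lines, and no limiting argument; what it costs is that the coefficient identities must be imported rather than derived from the single invariance relation \eqref{eq:1}. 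One point worth making explicit in your second step: since $z(T(J_{2\vk})-T(z))^{-1}$ is a function of $J_{2\vk}^2$ it is itself parity-preserving, hence \emph{equals} the parity-preserving part $\tfrac12\bigl(M+UMU^*\bigr)$ of $M=(J_{2\vk}-z)^{-1}$, and $V^*MV=V^*UMU^*V$ because $UV=V$; with that spelled out, the argument is complete.
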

\begin{proof}
Due to \eqref{eq:1} 
the operator $f(x)\mapsto f(T(x))$ is an isometry in $L^2_{\mu_{\sE}}$. Its adjoint is $\frac 1 2 \cL_0$ viewed as an operator in $L^2_{\mu_{\sE}}$. With the standard  eigenfunction expansion $\ell^2(\bbZ_+) \to L^2_{\mu_\sE}$,  $e_n \mapsto p_n(\cdot,\mu)$, the operator $f\mapsto f\circ T$ is conjugated to $V_+$. Thus the identity in the spectral representation 
\begin{equation}\label{eqn:5aug}
\frac 1 2\sum_{T(y)=x}\frac{f(T(y))}{y-z}=\frac{1}{2}\frac{T'(z)}{x-T(z)}f(x)
\end{equation}
can be rewritten as
$$
V_+^*(J_+ - z)^{-1}V_+ = \frac 1 2 T'(z)(J_+ - T(z))^{-1}.
$$
Likewise, the standard eigenfunction expansion for $J_-$ can be written as $\ell^2(\bbZ_-) \to L^2_\nu$, $e_{-1-n} \mapsto p_n(\cdot,\nu)$, and due to \eqref{eqnJminusrenorm}, the adjoint of $V_-$ in the spectral representation takes the form
\[
a_{-1}\cL_1 f(x)= a_{-1}\sum_{T(y)=x}\frac{f(y)}{T'(y)},
\]
used on continuous functions $f$ and extended by continuity to $L^2_{\nu}$. 
Similarly to $J_+$, the identity
\[
a_{-1}\sum_{T(y)=x}\frac 1{y-z}\frac{yf(T(y))}{a_{-1}T'(y)}=
\frac 1 2\sum_{T(y)=x}\frac 1{y-z}f(x)=\frac 1 2\frac{T'(z)}{x-T(z)}f(x)
\]
yields the identity
\[
V_-^*(J_- - z)^{-1}V_- = \frac 1 2 T'(z)(J_- - T(z))^{-1}.
\]
Taking direct sums yields
\[
V^*(J_0 - z)^{-1}V=\frac 1 2 T'(z)(J_0  - T(z))^{-1}.
\]
Using the shift operator $S e_n = e_{n+1}$ and noting $VS=S^2V$, we have
\[
V^*(S^{-2k}J_0S^{2k}-z)^{-1}V=\frac 1 2 T'(z)(S^{-k}J_0S^{k}-T(z))^{-1}.
\]
Therefore for a sequence $k_s\to\vk\in \bbZ_2$ we have $S^{-k_s}J_0S^{k_s}\to J_\vk$ and consequently \eqref{eq:28dec-1}.
\end{proof}

Let $\cE:\bbC^2\to \ell^2(\bbZ)$ be defined by
$$
\cE\begin{pmatrix}C_{-1}\\C_0\end{pmatrix}=\begin{pmatrix}e_{-1}&e_0 \end{pmatrix}
\begin{pmatrix}C_{-1}\\C_0\end{pmatrix}
=C_{-1}e_{-1}+C_0e_0.
$$
The spectral matrix-measure  $\Sigma_\vk(dx)$ associated to $J_\vk$ is defined by
$$
\cE^*(J_\vk-z)^{-1}\cE=\int\frac{\Sigma_\vk(dx)}{x-z}.
$$
We introduce the shift operation $\us: \bbZ_2\to \bbZ_2$ as
$$
\us\vk=\sum_{k=0}^\infty\e_{k+1}2^k, \quad \vk=\sum_{k=0}^\infty\e_{k}2^k
$$
We also introduce a modified shift on $\bbZ_2$ defined by
$$
\hus\vk=\begin{cases}\us\vk, &\vk=2\us\vk\\
1+\us\vk,& \vk=1+2\us\vk
\end{cases}
$$
which naturally appears in the following relation:

%%%%%%%%%%%
\begin{lemma} \label{l:26n4-1}
Define
$$
\fp_\vk(x)=\begin{pmatrix}
 a_{\vk-1}&0\\
a_{\vk}&1 \end{pmatrix}
\begin{pmatrix}
2/{T'(x)}&0\\
0&1 \end{pmatrix}, \quad \vk=2\us\vk,
$$
and
$$
\fp_\vk(x)=\begin{pmatrix}
1&a_{\vk}\\
0& a_{\vk+1} \end{pmatrix}
\begin{pmatrix}
1&0\\
0&2/{T'(x)} \end{pmatrix}, \quad \vk=1+2\us\vk.
$$
The following renormalization equation holds 
\begin{equation}\label{eq:29dec-2}
\Sigma_{\vk}(dx)=\frac 1 2 \fp_{\vk}^*(x)\cL_0^*\Sigma_{\hat\us\vk}(dx)\fp_{\vk}(x).
\end{equation}
\end{lemma}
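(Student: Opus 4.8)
The plan is to establish the measure identity \eqref{eq:29dec-2} by checking that the matrix-valued Cauchy transforms $\int(x-z)^{-1}(\,\cdot\,)(dx)$ of both sides coincide on $\bbC\setminus\sE$ and then invoking Stieltjes inversion for matrix measures. I would treat the even case $\vk=2\us\vk$ (so $\hus\vk=\us\vk$) in detail; the odd case $\vk=1+2\us\vk$ reduces to it via the conjugation $J_\vk=S^{-1}J_{\vk-1}S$, which shifts the distinguished window from $\{e_{-1},e_0\}$ to $\{e_0,e_1\}$ and thereby produces both the second form of $\fp_\vk$ and the replacement $\us\vk\mapsto 1+\us\vk=\hus\vk$.

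For the resolvent side, write $G_{m,n}=\langle e_m,(J_\vk-z)^{-1}e_n\rangle$, so that $\int(x-z)^{-1}\Sigma_\vk(dx)$ is the block of $G$ on $\{e_{-1},e_0\}$. Using the three-term recurrence $a_{\vk+m}G_{m-1,n}-zG_{m,n}+a_{\vk+m+1}G_{m+1,n}=\delta_{m,n}$ in both indices to eliminate the index $-1$ in favour of $-2$ and $0$ yields the congruence
\[
\begin{pmatrix}G_{-1,-1}&G_{-1,0}\\ G_{0,-1}&G_{0,0}\end{pmatrix}=P(z)^{\top}\begin{pmatrix}G_{-2,-2}&G_{-2,0}\\ G_{0,-2}&G_{0,0}\end{pmatrix}P(z)-\frac1z\begin{pmatrix}1&0\\0&0\end{pmatrix},\qquad P(z)=\begin{pmatrix}a_{\vk-1}/z&0\\ a_\vk/z&1\end{pmatrix}
\]
(transposes, since $G$ is complex symmetric; on the real line this matches $\fp_\vk^{*}=\fp_\vk^{\top}$). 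The block on $\{e_{-2},e_0\}$ is exactly what Lemma \eqref{eq:28dec-1}, compressed by $\cE$ and using $Ve_{-1}=e_{-2}$, $Ve_0=e_0$, identifies with $\tfrac12 T'(z)\int(x-T(z))^{-1}\Sigma_{\us\vk}(dx)$. Substituting and pulling the $z$-dependent matrix inside the integral expresses $\int(x-z)^{-1}\Sigma_\vk(dx)$ through $\Sigma_{\us\vk}$.

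For the right-hand side of \eqref{eq:29dec-2}, I expand $\cL_0^{*}$ over the two inverse branches $y_\pm(x)=\pm\sqrt{x+\lambda}$ of $T$. Since $T'(y)=2y$, one has $\fp_\vk(y)=P(y)$ at every preimage $y$ of $x$, so its Cauchy transform becomes $\tfrac12\int\sum_{T(y)=x}(y-z)^{-1}P(y)^{\top}\Sigma_{\us\vk}(dx)P(y)$. The two expressions are then matched entrywise by the partial-fraction identity $T'(z)/(x-T(z))=\sum_{\pm}(y_\pm-z)^{-1}$: the $(2,2)$ and off-diagonal entries agree identically, and the $(1,1)$ entries differ only through the term $-2/((x+\lambda)z)$ applied to the scalar quadratic form $a_{\vk-1}^2\Sigma_{\us\vk,11}+2a_{\vk-1}a_\vk\Sigma_{\us\vk,12}+a_\vk^2\Sigma_{\us\vk,22}$. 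Thus the entire identity reduces to the scalar normalization
\[
\int\frac{a_{\vk-1}^2\,\Sigma_{\us\vk,11}(dx)+2a_{\vk-1}a_\vk\,\Sigma_{\us\vk,12}(dx)+a_\vk^2\,\Sigma_{\us\vk,22}(dx)}{x+\lambda}=1,
\]
which is precisely what cancels the rank-one correction $-\tfrac1z\left(\begin{smallmatrix}1&0\\0&0\end{smallmatrix}\right)$.

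This normalization is the crux, and I expect it to be the main obstacle; I would establish it from \eqref{eq:28dec-1} itself. With $\tfrac12T'(z)=z$, expand \eqref{eq:28dec-1} at $z=0$ (legitimate since $0,-\lambda\notin\sE$ for $\lambda>2$): the zeroth order gives $V^{*}J_{2\us\vk}^{-1}V=0$ (the even--even block of $J^{-1}$ vanishes because all diagonal entries are zero), and the first order gives the clean operator identity $V^{*}J_{2\us\vk}^{-2}V=(J_{\us\vk}+\lambda)^{-1}$. Writing $\xi=a_{\vk-1}e_{-1}+a_\vk e_0$, the left-hand side of the normalization equals $\langle\xi,(J_{\us\vk}+\lambda)^{-1}\xi\rangle=\|J_{2\us\vk}^{-1}V\xi\|^2$. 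Since $V\xi=a_{\vk-1}e_{-2}+a_\vk e_0=J_{2\us\vk}e_{-1}$, we get $J_{2\us\vk}^{-1}V\xi=e_{-1}$ and hence the value $1$. Matching the Cauchy transforms and applying uniqueness completes the even case, and the shift argument described above yields the odd case with the second form of $\fp_\vk$.
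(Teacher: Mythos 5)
Your proposal is correct, but it takes a genuinely different route from the paper's proof. The paper passes to the spectral representation of $J_\vk$ by orthogonal polynomials of the first and second kind: there $e_{-1}\mapsto\binom{1}{0}$, $e_0\mapsto\binom{0}{1}$, and the three-term recurrence gives $e_{-2}\mapsto\binom{x/a_{\vk-1}}{-a_\vk/a_{\vk-1}}$, so the $\{e_{-2},e_0\}$-block of the resolvent is the Cauchy transform of $M(x)^*\Sigma_\vk(dx)M(x)$ with $M(x)=\fp_\vk(x)^{-1}$; comparing with \eqref{eq:28dec-1} then yields $M^*\Sigma_\vk M=\tfrac12\cL_0^*\Sigma_{\us\vk}$ with \emph{no} correction term, and one simply conjugates by $M^{-1}=\fp_\vk$. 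You instead run the recurrence in the opposite direction, eliminating the index $-1$ from the Green's function, which picks up the rank-one term $-z^{-1}\diag(1,0)$ from the inhomogeneity $\delta_{m,n}$; the whole identity then hinges on the normalization $\int(x+\lambda)^{-1}v^*\Sigma_{\us\vk}(dx)\,v=1$ with $v=\binom{a_{\vk-1}}{a_\vk}$, a condition that is invisible in the paper's formulation. Your derivation of that normalization---expanding \eqref{eq:28dec-1} to first order at $z=0$ to get $V^*J_{2\us\vk}^{-2}V=(J_{\us\vk}+\lambda)^{-1}$ and then observing $V\xi=J_{2\us\vk}e_{-1}$---is correct and rather elegant, and the entrywise partial-fraction matching does check out (using $\sum_{T(y)=x}y^{-1}=0$ and $\sum_{T(y)=x}y^{-2}=2/(x+\lambda)$), reducing exactly to that normalization. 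For the odd case the paper conjugates by the involution $\tau e_n=e_{-1-n}$, whereas you conjugate by the shift $S$; your version works, provided you record that the $\{e_0,e_1\}$-block of $(J_{\us\vk}-T(z))^{-1}$ is the Cauchy transform of $\Sigma_{\us\vk+1}$ (since $S^{-1}J_{\us\vk}S=J_{\us\vk+1}$), that the correction becomes $-z^{-1}\diag(0,1)$, and that the normalization now evaluates to $\|J_{\vk-1}^{-1}(a_\vk e_0+a_{\vk+1}e_2)\|^2=\|e_1\|^2=1$. The trade-off: the paper's spectral-representation argument is shorter because the polynomial relation absorbs the correction automatically, while yours is more elementary (pure resolvent and recurrence identities) at the cost of an extra, but correctly proved, normalization lemma.
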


\begin{proof}
Let $\vk=2\us\vk$, i.e., $\e_0=0$.  
We will pass to spectral matrix-measures. Recall that in the spectral representation for two-sided Jacobi matrices, for $k\ge 0$, we have
$$
e_k\mapsto \begin{pmatrix}
-a_0q^{+}_{k}\\
p^{+}_{k}
\end{pmatrix},\quad e_{-1-k}\mapsto \begin{pmatrix}
p^{-}_{k}\\
-a_0q^{-}_{k}
\end{pmatrix},
$$
where $p_k^\pm, q_k^\pm$ are orthogonal polynomials of the first and second kind generated by the half-line restrictions $(J_\vk)_\pm$ of $J_\vk$ to $\ell^2(\bbZ_\pm)$. 
In particular
\begin{align}\label{eq:Ben1}
	e_{-2}\mapsto
	\begin{pmatrix}
		\frac{x}{ a_{\vk-1}}\\
		-\frac{a_{\vk}}{a_{\vk-1}}
	\end{pmatrix}=
	\begin{pmatrix}
		\frac{T'(x)}{2 a_{\vk-1}}\\
		-\frac{a_{\vk}}{a_{\vk-1}}
	\end{pmatrix},\quad e_0\mapsto
	\begin{pmatrix}
		0\\1
	\end{pmatrix}.
\end{align}
Therefore \eqref{eq:28dec-1} implies
$$
\begin{pmatrix}
\frac{T'(x)}{2 a_{\vk-1}}&0\\
-\frac{a_{\vk}}{a_{\vk-1}}&1 \end{pmatrix}^*
\Sigma_\vk(dx)
\begin{pmatrix}
\frac{T'(x)}{2 a_{\vk-1}}&0\\
-\frac{a_{\vk}}{a_{\vk-1}}&1 \end{pmatrix}=\frac 1 2 \cL_0^*\Sigma_{\us\vk}(dx).
$$
We get \eqref{eq:29dec-2} in the given case.

Let $\vk=1+2\us\vk$, i.e., $\e_0=1$.
Now we have
\begin{equation}\label{eq:13jan-1}
V^*S(J_{\vk}-z)^{-1}S^{-1}V=
V^*(J_{\vk-1}-z)^{-1}V=
\frac 1 {2} T'(z)(J_{\us\vk}-T(z))^{-1}.
\end{equation}
We define an involution $\tau e_n=e_{-1-n}$, $\tau^2=I, \tau^*=\tau$. Note that
$$
\tau V\tau e_n=\tau Ve_{-1-n}=\tau e_{-2-2n}=e_{2n+1}=SV e_n
$$
and
$$
\tau S\tau e_n=\tau Se_{-1-n}=\tau e_{-n}=e_{-1+n}=S^{-1}e_n.
$$
Denote $J^\sharp=\tau J\tau$. Conjugating \eqref{eq:13jan-1} with $\tau$ we get
\begin{equation*}\label{}
V^*S^{-2}(J^\sharp_{\vk}-z)^{-1}S^2V=S^{-1}V^*(J^\sharp_{\vk}-z)^{-1}VS=\frac 1 {2} T'(z)(J^{\sharp}_{\us\vk}-T(z))^{-1}
\end{equation*}
and therefore \eqref{eq:13jan-1} gets the form
\begin{align}\label{eq:13jan-2}
V^*(J^\sharp_{\vk}-z)^{-1}V
=&\frac 1 {2} T'(z)(SJ^{\sharp}_{\us\vk}S^{-1}-T(z))^{-1} \nonumber\\
=&\frac 1 {2} T'(z)(\tau S^{-1}J_{\us\vk}S\tau-T(z))^{-1}\nonumber\\
=&\frac 1 {2} T'(z)(J_{\us\vk+1}^\sharp-T(z))^{-1}.
\end{align}
Using 
$$
\tau\cE=\cE\sigma_1, \quad\sigma_1=\begin{pmatrix}
0&1\\1&0
\end{pmatrix},
$$
we have
\begin{equation}\label{eq:13jan-3}
\Sigma^{\sharp}(dx)=\sigma_1\Sigma(dx)\sigma_1,
\quad\text{where}\quad
\int\frac{\Sigma^{\sharp}(dx)}{x-z}=\cE^*(J^\sharp-z)^{-1}\cE.
\end{equation}
Thus \eqref{eq:13jan-2} yields the following relation for spectral matrix-functions
$$
\begin{pmatrix}
\frac{T'(x)}{2 a_{\vk+1}}&0\\
-\frac{a_{\vk}}{a_{\vk+1}}&1 \end{pmatrix}^*
\Sigma_\vk^{\sharp}(dx)\begin{pmatrix}
\frac{T'(x)}{2 a_{\vk+1}}&0\\
-\frac{a_{\vk}}{a_{\vk+1}}&1 \end{pmatrix}=\frac 1 2 \cL_0^*\Sigma_{\us\vk+1}^\sharp(dx)
$$
Using \eqref{eq:13jan-3}, we obtain \eqref{eq:29dec-2} in the second case.
\end{proof}

\begin{remark}\label{r:25may-1}
The modified shift obeys
\[
\vk =\begin{cases} 2 \hus \vk & \vk\text{ even} \\
-1 + 2 \hus \vk & \vk\text{ odd}
\end{cases}
\]
so this is equivalent to the standard shift applied to $-\vk$. In other words
\begin{equation}\label{eq:21may25}
\hus\vk = - \us (-\vk).
\end{equation}
\end{remark}

\begin{definition}
	We denote
	$$
	\hus^n\vk=\sum_{k=0}^\infty\e^n_k 2^k.
	$$
	Define
	$$
	\kappa=\kappa(\vk)=\sum_{n=0}^\infty\e^n_0 2^n. 
	$$
\end{definition}
It gives the identity related to the standard shift
$$
\kappa(\hus\vk)=\us\kappa(\vk).
$$

We decompose $\bbZ_2$ into two shift invariant subsets. 
\begin{definition}
For an arbitrary $N\in\bbZ_+$ we define $\cF_N$ as a collection of $\vk\in \bbZ_2$ such that $\vk$  has at most $N$ consecutive zeros or ones in its dyadic representation.
\end{definition}

\begin{lemma}\label{lem:26}
	Let $\vk\in \cF_{N}$. Then $\kappa(\vk)\in \cF_{N+1}$.
\end{lemma}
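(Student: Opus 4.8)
The plan is to compute $\kappa(\vk)$ in closed form and then transport the run-length hypothesis through the resulting map. My claim is that $\kappa(\vk)=-\vk$; once this is established, Lemma \ref{lem:26} reduces to a purely combinatorial statement about dyadic negation, namely that negation can increase the maximal run length by at most one.

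\emph{Step 1 (identify $\kappa$).} I would iterate the identity $\hus\vk=-\us(-\vk)$ of Remark \ref{r:25may-1} to get $\hus^n\vk=-\us^n(-\vk)$ for every $n$, by a one-line induction. By definition $\e^n_0$ is the lowest dyadic digit (the parity) of $\hus^n\vk$. Since negation on $\bbZ_2$ preserves parity and $\us^n$ deletes the $n$ lowest digits, $\e^n_0$ equals the parity of $\us^n(-\vk)$, i.e.\ the $n$-th digit of $-\vk$. Summing over $n$ gives $\kappa(\vk)=\sum_{n\ge0}\e^n_0 2^n=-\vk$. This is consistent with the stated identity $\kappa(\hus\vk)=\us\kappa(\vk)$, which under $\kappa=-\mathrm{id}$ reads $-\hus\vk=\us(-\vk)$, precisely Remark \ref{r:25may-1}. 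Thus it suffices to prove that $\vk\in\cF_N$ implies $-\vk\in\cF_{N+1}$.

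\emph{Step 2 (negation and runs).} Here I would use two's complement: $-\vk=\overline{\vk}+1$, where $\overline{\vk}$ is the digitwise complement. Since $\vk\in\cF_N$, it is neither eventually $0$ nor eventually $1$, so it has a well-defined lowest set digit at some position $j$. Carrying the $+1$ through the trailing ones of $\overline{\vk}$ shows that $-\vk$ agrees with $\vk$ in positions $0,\dots,j$ and equals the digitwise complement of $\vk$ in all positions $>j$. Complementation preserves run lengths, so every maximal run of $-\vk$ contained in positions $>j$ has the same length as a run of $\vk$, hence at most $N$; the initial block of $j$ zeros (present when $j\ge1$) is a trailing-zero run of $\vk$, so $j\le N$. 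The only runs not yet accounted for are those meeting the seam at position $j$. If the lowest set digit starts a run of ones of length $r\ge2$ in $\vk$, then the digit at position $j$ becomes an isolated $1$ in $-\vk$ and no run grows. In the case $r=1$, the isolated lowest $1$ of $\vk$ is immediately followed by a run of $s\le N$ zeros; complementation turns these into $s$ ones, which merge with the digit at position $j$ into a run of ones of length $1+s\le N+1$ in $-\vk$. In every case all runs of $-\vk$ have length at most $N+1$, so $-\vk\in\cF_{N+1}$.

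The step I expect to be the main obstacle is the seam bookkeeping in Step 2: one must track exactly which runs merge or split at the lowest set digit and confirm that the surplus is precisely $+1$, coming only from the merge of the isolated lowest one with the adjacent complemented zero-run. An explicit example such as $\vk=\dots 1001$, whose negation begins $\dots111$, shows that the bound $N+1$ is sharp and cannot be improved.
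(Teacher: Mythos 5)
Your proof is correct and follows essentially the same route as the paper: the paper's proof likewise iterates the identity $\hus^n\vk=-\us^n(-\vk)$ to obtain the digit formula $\e^{n+n_0}_0=\e_{n+n_0}+1 \bmod 2$ for $n\ge 1$ (with the digits agreeing with those of $\vk$ up to and including the lowest set bit $n_0$), which is exactly the two's-complement description of $-\vk$ that you use. Your explicit identification $\kappa(\vk)=-\vk$ and the case analysis at the seam are, if anything, slightly more complete than the paper's argument, which only records the extremal case producing $N+1$ consecutive ones.
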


\begin{proof} We will prove an explicit relation \eqref{eq25m-1} between components of $\vk$ and $\kappa(\vk)$.
	Recall that
	$$
	\hus^n \vk=-\us^n(-\vk).
	$$
	Further, let $\vk=\sum_{k\ge 0}\e_k 2^k$. Since
	$$
	\sum_{k\ge 0}\e_k 2^k+\sum_{k\ge 0}(1-\e_k)2^k=-1
	$$
	we have
	$$
	-\vk=1 + \sum_{k\ge 0}(1-\e_k) 2^k.
	$$
	
	Let $\vk\in\cF_N$. Let $n_0 = \min\{ k \in \bbZ_+ \mid \e_k = 1\}$. Note $n_0 \le N$. By repeatedly applying the modified shift, we see that $\e_0^n = 0$ for all $n < n_0$ and $\e_0^{n_0}=1$. Applying $n_0$ shifts we get an odd number
	$$
	\hus^{n_0}\vk=1+\sum_{k\ge 1} \e_{k+n_0} 2^k,\quad n_0\le N.
	$$
	Then
	$$
	-\hus^{n_0}\vk= 1 + \sum_{k\ge 1}(1-\e_{k+n_0}) 2^k
	$$
	and
	$$
	\us^n(-\hus^{n_0}\vk)=\sum_{k\ge 0}(1-\e_{k+n_0+n}) 2^k,\quad n\ge 1.
	$$
	We obtain
	$$
	\hus^{n+n_0}\vk = 1 + \sum_{k\ge 0}\e_{k+n+n_0} 2^k,
	$$
from which we read off all the remaining digits of $\kappa(\vk)$ to be
\begin{equation}\label{eq25m-1}
\e^{n+n_0}_0=\e_{n+n_0}+1\mod 2,\quad n\ge 1.
\end{equation}
	
	Note that $\e^{n_0}_0=1$ and $\e^n_0=0$ for $n<n_0$. So, if $\e_{k+n_0}=0$ for $1\le k\le N$ we get $N+1$ consecutive ones in the decomposition of $\kappa(\vk)$.
\end{proof}

Evidently, each $\cF_N$ is a perfect closed set. We denote
$$
\cF_\sigma=\cup_{N\ge 1}\cF_N,\quad \cG_\delta=\bbZ_2\setminus \cF_\sigma.
$$

We will use duality with respect to the pairing
$$
\int \tr \{g(x)\Sigma(dx)\},
$$
where $g$ is $2\times 2$ continuous matrix function on $\sE$. Motivated by this duality and Lemma~\ref{l:26n4-1},  we define matrix Ruelle operators.

\begin{definition}\label{def:1}
 We introduce the  following Ruelle operator with matrix weight, acting on $2\times 2$ continuous  matrix-functions
\begin{equation}\label{eq:19jan-3}
(\cM_\vk g)(x)
=\frac 1 2\sum_{T(y)=x}
\fp_{\vk}(y) g(y)\fp^*_{\vk}(y).
\end{equation}
\end{definition}
Note that 
$$
\int \tr \{g(x)(\cM_\vk^*\Sigma)(dx)\}=\int \tr\{ (\cM_\vk g)(x)\Sigma(dx)\},
$$
and $g(x)\ge 0$ implies $(\cM_\vk g)(x)\ge 0$. In particular \eqref{eq:29dec-2} has the form
\begin{equation}\label{eq:29dec-2cor}
\Sigma_{\vk}(dx)=(\cM_{\vk}^*\Sigma_{\hat\us\vk})(dx).
\end{equation}

%%%%%%%%%%%%%
\section{Main theorem: generic case}

Recall that 
 $$
 \sE\subset [-\xi,\xi],\quad\xi=\frac{1+\sqrt{1+4\lambda}} 2, 
  $$
 and $\pm \xi \in \sE$. In fact, $T^{-1}([-\xi,\xi]) \subset [-\xi,\xi]$ and $\sE = \cap_{n=0}^\infty (T^{-1})^{\circ n}([-\xi,\xi])$.  Moreover, note that $T(0)=-\lambda < - \xi$.

First we show that the collection of rational functions with simple poles at $T^{\circ k}(0)\in \bbR\setminus [-\xi,\xi]$ is invariant for the action of 
$\cM_\vk$ (we do not consider  its closure). 

\begin{lemma}\label{l:13feb-1}
Let $A$ be a constant $2\times 2$ matrix and
$$
g_z(x)=\frac{A}{x-z}, \quad z\in \bbC\setminus\sE.
$$
Then
\begin{equation}\label{eq:19jan-4}
(\cM_\vk g_{T^{\circ k}(0)})(x)=\frac{\Phi_\vk A\Phi_{\vk}^*}{-T^{\circ k}(0)(x-T(0))}
+\frac{T^{\circ k}(0)\fp_\vk(T^{\circ k}(0)) A\fp_\vk(T^{\circ k}(0))^*}{x-T^{\circ (k+1)}(0)}.
\end{equation}
where
\begin{equation}\label{eq:25feb-7}
\Phi_\vk=\left.(z\fp_\vk(z))\right|_{z=0}
=\begin{cases}\begin{pmatrix}
 a_{\vk-1}&0\\
a_{\vk} &0\end{pmatrix}, \quad \vk=2\us\vk,\\
\\
\begin{pmatrix}
0&a_{\vk}\\
0& a_{\vk+1} \end{pmatrix}, \quad \vk=1+2\us\vk.
\end{cases}
\end{equation}
\end{lemma}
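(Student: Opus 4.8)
The plan is to compute the action of $\cM_\vk$ directly on $g_z(x) = A/(x-z)$ and then specialize to $z = T^{\circ k}(0)$. By Definition~\ref{def:1},
\[
(\cM_\vk g_z)(x) = \frac{1}{2} \sum_{T(y)=x} \fp_\vk(y)\, \frac{A}{y-z}\, \fp_\vk^*(y),
\]
so the whole computation reduces to understanding the rational-in-$y$ matrix $\fp_\vk(y) A \fp_\vk^*(y)/(y-z)$ summed over the two preimages $y$ of $x$ under $T$. First I would examine the structure of $\fp_\vk(y)$: in both cases of \eqref{eq:25feb-7} it has the schematic form (constant matrix) times a diagonal matrix whose entries are $1$ and $2/T'(y)$. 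Since $T'(y) = 2y$, the factor $2/T'(y) = 1/y$, so $\fp_\vk(y)$ is a matrix whose entries are rational functions of $y$ with at worst a simple pole at $y=0$. Thus $\fp_\vk(y) A \fp_\vk^*(y)$ is a matrix-valued rational function of $y$ with a possible double pole at $y=0$, and after division by $(y-z)$ one has poles at $y=0$ and $y=z$.

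The cleanest route is a partial-fraction / residue argument applied to the sum over preimages. I would use the general identity, valid for any rational $R(y)$ summed over $T(y)=x$, that $\frac{1}{2}\sum_{T(y)=x} R(y)$ is itself a rational function of $x$ whose poles arise from (i) the pole of $R$ at $y=z$, producing a pole at $x = T(z)$ via \eqref{eqn:5aug}-type computations, and (ii) the pole of $R$ at $y=0$, producing a pole at $x=T(0)$. Concretely, the key scalar building block is \eqref{eqn:5aug}, which after clearing denominators gives
\[
\frac{1}{2}\sum_{T(y)=x} \frac{1}{y-z} = \frac{1}{2}\,\frac{T'(z)}{x-T(z)}.
\]
For the matrix version, I would write $\fp_\vk(y) A \fp_\vk^*(y) = B_0(y) + B_1(y)/y + B_2/y^2$ where the $B_i$ are collected from the explicit entries, and then evaluate the sum term by term, tracking how the $1/y$ and $1/y^2$ contributions generate the pole at $x = T(0) = -\lambda$ while the $1/(y-z)$ contribution generates the pole at $x = T(z) = T^{\circ(k+1)}(0)$.

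Setting $z = T^{\circ k}(0)$ at the end identifies the second pole location as $T^{\circ(k+1)}(0)$ and produces its residue as $T^{\circ k}(0)\,\fp_\vk(T^{\circ k}(0)) A \fp_\vk(T^{\circ k}(0))^*$, matching the claimed second term in \eqref{eq:19jan-4}; the factor $T^{\circ k}(0)$ comes from $\tfrac{1}{2}T'(z) = z$ evaluated at $z = T^{\circ k}(0)$. The first term, supported at $x = T(0)$, should have residue governed by the behavior of $\fp_\vk(y) A \fp_\vk^*(y)$ near $y=0$; here the definition $\Phi_\vk = (z\fp_\vk(z))|_{z=0}$ in \eqref{eq:25feb-7} is exactly the object that extracts the relevant limit, so that the residue collapses to $\Phi_\vk A \Phi_\vk^*$ divided by $-T^{\circ k}(0)$.

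I expect the main obstacle to be the careful bookkeeping of the pole at $y=0$: because $\fp_\vk(y) A \fp_\vk^*(y)$ has a genuine double pole there (from the two factors of $1/y$), one must verify that the double-pole contributions conspire to leave only a simple pole at $x=T(0)$ with precisely the residue $\Phi_\vk A \Phi_\vk^*/(-T^{\circ k}(0))$. This is where the precise form of $\Phi_\vk$ and the fact that $T(0) = -\lambda$ while $T^{\circ k}(0)$ sits in $\bbR \setminus [-\xi,\xi]$ matter. A convenient way to organize this is to note that near $x = T(0)$ only one preimage $y$ tends to $0$, so the sum effectively localizes to that branch and the residue can be read off from the Laurent expansion of $\fp_\vk(y) A \fp_\vk^*(y)$ at $y=0$ combined with the change of variables $x - T(0) = T(y) - T(0) = y^2$; verifying that the $1/y^2$ and $1/y$ terms combine correctly under this substitution is the delicate step, but it is a finite explicit calculation using the two cases of \eqref{eq:25feb-7}.
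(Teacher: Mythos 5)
Your overall strategy --- regard $(\cM_\vk g_z)(x)$ as a rational function of $x$ vanishing at infinity and identify it with the sum of its principal parts at $x=T(0)$ and $x=T(z)$ --- is sound and does lead to \eqref{eq:19jan-4}. It is essentially a transposed version of the paper's argument: the paper instead writes the single identity
\[
\tfrac12\,\fp_\vk(z)\,\frac{A\,T'(z)}{x-T(z)}\,\fp_\vk^*(z)
=(\cM_\vk g_z)(x)+\frac1z\cdot\frac{\Phi_\vk A\Phi_\vk^*}{x-T(0)}
\]
and verifies it by matching principal parts \emph{in the variable $z$} (simple poles at the two preimages of $x$ and at $z=0$, both sides decaying as $z\to\infty$), then substitutes $z=T^{\circ k}(0)$. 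Working in $z$ is slicker precisely because the factor $\tfrac12T'(z)=z$ cancels one of the two $1/z$'s in $\fp_\vk(z)A\fp_\vk^*(z)$, so only simple poles ever appear and the double-pole bookkeeping you worry about never arises.

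The one concrete error in your sketch is the claim that ``near $x=T(0)$ only one preimage $y$ tends to $0$, so the sum effectively localizes to that branch.'' This is false: $y=0$ is the critical point of $T$, so \emph{both} preimages $y=\pm\sqrt{x-T(0)}$ collapse to $0$ as $x\to T(0)$. Localizing to a single branch gives a factor-of-$2$ error in the residue and, worse, leaves the $1/y$ term of the Laurent expansion of $\fp_\vk(y)A\fp_\vk^*(y)/(y-z)$ uncancelled, producing a spurious $(x-T(0))^{-1/2}$ singularity that contradicts rationality in $x$. The correct bookkeeping sums over both branches: the odd ($1/y$) part cancels between $+\sqrt{x-T(0)}$ and $-\sqrt{x-T(0)}$, while the $1/y^2$ term $-\Phi_\vk A\Phi_\vk^*/(zy^2)$ contributes twice, and $2\cdot\tfrac12\cdot\bigl(-\tfrac1z\bigr)\Phi_\vk A\Phi_\vk^*\,(x-T(0))^{-1}$ at $z=T^{\circ k}(0)$ is exactly the first term of \eqref{eq:19jan-4}. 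With that correction (and the observation that $k\ge1$, so $z=T^{\circ k}(0)\neq0$ and the division by $T^{\circ k}(0)$ is legitimate), your computation closes; the treatment of the pole at $x=T(z)$, where the localization to one branch \emph{is} valid since $T'(z)\neq0$, is correct as written.
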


\begin{proof}
Let us apply $\cM_\vk$ to $g_z(x)$.
To this end we consider an identity between rational functions in $z$,
\begin{equation}\label{eq:18jun-1}
\frac 1 2
\fp_{\vk}(z) \frac{A T'(z)}{x-T(z)}\fp^*_{\vk}(z)=(\cM_\vk g_z)(x)+\frac{B_0(x)}{z},
\end{equation}
where
$$
B_0(x)=\frac{\Phi_\vk A\Phi^*_\vk}{x-T(0)},
$$
verified by comparing principal parts, since both sides decay at $\infty$. Set $z=T^{\circ k}(0)$ and we obtain \eqref{eq:19jan-4}. 
\end{proof}

Define  the function $w^n_x(z)$ in $z$, $n\ge 0$,  by 
\begin{equation}\label{eq:6aug-2}
w^n_x(z):=\frac 1{2^n}\frac{(T^{\circ n})'(z)}{x-T^{\circ n}(z)}=\frac{z T(z)\cdots T^{\circ(n-1)}(z)}{{x-T^{\circ n}(z)}},\quad x\in[-\xi,\xi],
\end{equation}
where by definition $T^{\circ 0}(z)=z$.  Note that $w_x^n$ has the partial fraction decomposition
\begin{equation}\label{eqn:wxnpartial}
w_x^n(z) = \frac 1{2^n} \sum_{T^n(y) = x} \frac 1{y-z},
\end{equation}
in particular $w_x^n$ is a Herglotz function.

\begin{remark} 
We can rewrite \eqref{eq:19jan-4} in terms of $w^k_x(T(0))$ as follows
\begin{align}\nonumber\label{eq:18mar-1}
\cM_\vk w_x^{k-1}(T(0)) A=&w_{0}^{k-1}(T(0))w_x^0(T(0))\Phi_\vk A\Phi_\vk^*\\+&w_x^k(T(0))
\fp_\vk(T^{\circ k}(0))A
\fp_\vk(T^{\circ k}(0))^*.
\end{align}
\end{remark}

%%%%%%%%%%%

\begin{lemma}\label{h:24-1} 
Let 
$
 \fM_{n,\vk}=\cM_{\hus^{n-1}\vk}\cdots \cM_{\vk},
$
so that 
$$
(\fM_{n,\vk}^*\Sigma_{\hat \us^n\vk})(dx)=\Sigma_{\vk}(dx).
$$
Define
$$
h_n(x)= (\fM_{n,\vk} h_0)(x), \quad h_0(x)=w_x^0(T(0)) I. 
$$
Then
the sequence $h_n(x)$  is uniformly bounded on $[-\xi,\xi]$. 
\end{lemma}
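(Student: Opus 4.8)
The plan is to expand $h_n$ along the functions $w_x^k(T(0))$ and to reduce the uniform bound to a summability estimate on matrix coefficients. By Lemma~\ref{l:13feb-1} and its reformulation \eqref{eq:18mar-1}, the operator $\cM_\vk$ maps any term $w_x^m(T(0))A$ to a combination of $w_x^0(T(0))$ and $w_x^{m+1}(T(0))$; since $h_0=w_x^0(T(0))I$, induction on $n$ gives
\[
h_n(x)=\sum_{k=0}^n w_x^k(T(0))\,C_{n,k},
\]
with the recursion $C_{n+1,k}=\fp_{\hus^n\vk}(T^{\circ k}(0))\,C_{n,k-1}\,\fp_{\hus^n\vk}(T^{\circ k}(0))^*$ for $k\ge1$ and $C_{n+1,0}=\sum_{m=0}^n w_0^m(T(0))\,\Phi_{\hus^n\vk}C_{n,m}\Phi_{\hus^n\vk}^*$. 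Because $\cM_\vk$ preserves positivity and each $w_x^k(T(0))>0$ on $[-\xi,\xi]$ (a positive combination of $1/(y-T(0))$ by \eqref{eqn:wxnpartial}, using $T(0)=-\lambda<-\xi$), all $C_{n,k}\ge0$; as the $h_n$ are positive semidefinite, it suffices to bound $\sum_{k=0}^n w_x^k(T(0))\,\tr C_{n,k}$ uniformly in $n$ and $x\in[-\xi,\xi]$.

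First I would record the scalar inputs. The critical orbit escapes super-exponentially, $T^{\circ(j+1)}(0)=(T^{\circ j}(0))^2-\lambda$ with $T(0)=-\lambda$ its only point below $-\xi$, so the product formula \eqref{eq:6aug-2} shows that $w_x^k(T(0))$ and $w_0^k(T(0))$ stay positive and bounded above and below, uniformly in $k$ and $x\in[-\xi,\xi]$, while the transfer entries $2/T'(T^{\circ k}(0))=1/T^{\circ k}(0)$ in $\fp_\vk(T^{\circ k}(0))$ decay super-exponentially in $k$. The matrices $\fp_\vk,\Phi_\vk$ are uniformly bounded since the $a_\vk$ are bounded. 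Two structural features then organize the recursion: $\Phi_\vk$ has rank one by \eqref{eq:25feb-7}, so every reset $C_{n,0}$ ($n\ge1$) is rank one, $C_{n,0}=\gamma_n v^{(n-1)}(v^{(n-1)})^*$ for a scalar $\gamma_n\ge0$ and a vector $v^{(n-1)}$ depending only on the parity of $\hus^{n-1}\vk$; and the higher coefficients are conjugations of earlier resets by products of $\fp$'s, whose off-diagonal entries are the super-exponentially small quantities above. Consequently the intermediate indices $1\le m\le n-1$ contribute only super-exponentially small amounts to each new reset, and $\gamma_{n+1}$ equals, up to a summable error, $w_0^0(T(0))$ times a single diagonal entry of $C_{n,0}$.

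The decisive algebraic input is a quadratic relation among the Jacobi coefficients. From the isometry $f\mapsto f\circ T$ on $L^2_{\mu_\sE}$ (equivalently $p_{2n}(\cdot,\mu_\sE)=p_n(T(\cdot),\mu_\sE)$) one derives $a_{2n}^2+a_{2n+1}^2=\lambda$ and $a_{2n-1}a_{2n}=a_n$. Feeding this into the reset recursion, the diagonal entry of $C_{n,0}$ that enters $\gamma_{n+1}$ is of the form $a^2$ with $a^2=\lambda-(a')^2$ for a neighbouring coefficient $a'$, so, since $w_0^0(T(0))=1/\lambda$,
\[
\gamma_{n+1}\le \tfrac{a^2}{\lambda}\,\gamma_n+(\text{summable})=\bigl(1-\tfrac{(a')^2}{\lambda}\bigr)\gamma_n+(\text{summable}).
\]
This exhibits each reset as a contraction with factor $1-(a')^2/\lambda\le1$; summing the resulting renewal inequality, together with the fact that each $w_x^k(T(0))\tr C_{n,k}$ is comparable to $\gamma_{n-k}$, bounds $\sum_k w_x^k(T(0))\tr C_{n,k}$ by a constant multiple of $\sum_m\gamma_m$, whose finiteness is exactly what the contraction provides.

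The main obstacle is that this contraction is \emph{not uniform}. The Jacobi coefficients are not bounded below: since $2^k\to0$ in $\bbZ_2$ and $a_0=0$, continuity \eqref{eqnZ2continuity} forces $a_{2^k}\to0$, so the factor $1-(a')^2/\lambda$ degenerates to $1$ precisely when the relevant shifted index $\hus^{n-1}\vk$ comes close to $0\in\bbZ_2$. Establishing uniform summability of $\{\gamma_m\}$ therefore requires controlling how often and how deeply the orbit $\{\hus^n\vk\}$ approaches $0$, i.e.\ how long the runs of constant digits in $\vk$ (and in $\kappa(\vk)$) can be. This is exactly the point at which the decomposition of the hull into $\cF_\sigma$ and $\cG_\delta$ enters, and taming this near-critical accumulation—rather than any single estimate above—is the heart of the argument.
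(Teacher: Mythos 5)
Your setup matches the paper's: the expansion $h_n(x)=\sum_{k=0}^n w_x^k(T(0))A_k^n$ with non-negative matrix coefficients (this is \eqref{eq:Ben2}), the positivity preserved by $\cM_\vk$, and the two-sided bound $\frac{1}{\lambda+\xi}\le w_x^k(T(0))\le\frac{1}{\lambda-\xi}$ of \eqref{eq:27mar-1}. But you miss the one idea that closes the argument, and the route you substitute for it would not prove the lemma as stated. The paper bounds $\sum_{k=0}^n\tr A_k^n$ not by analyzing the coefficient recursion at all, but by duality: pairing $h_n$ against the spectral measure $\Sigma_{\hus^n\vk}$ and using $(\fM_{n,\vk}^*\Sigma_{\hus^n\vk})(dx)=\Sigma_\vk(dx)$ gives
\[
\int\tr\bigl(h_0(x)\Sigma_\vk(dx)\bigr)=\int\tr\bigl(h_n(x)\Sigma_{\hus^n\vk}(dx)\bigr)\ge\frac{1}{\lambda+\xi}\sum_{k=0}^n\tr A_k^n,
\]
since the matrix measures are normalized, $\int\Sigma_{\hus^n\vk}(dx)=I$. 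Combined with the upper bound on the $w$'s this yields $\tr h_n(x)\le\frac{\lambda+\xi}{\lambda-\xi}\int\tr(h_0\,\Sigma_\vk(dx))$, uniformly in $n$, $x$, and $\vk$. No contraction, no quadratic relations among the $a_\vk$, and no control of digit runs is needed.

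Your contraction/renewal argument is therefore not a harder version of the right proof but a wrong turn: as you yourself observe, the factor $1-(a')^2/\lambda$ degenerates to $1$ when $\hus^{n-1}\vk$ approaches $0$ in $\bbZ_2$, so your approach can at best handle $\vk\in\cF_\sigma$ and cannot establish the lemma for all $\vk\in\bbZ_2$ --- yet the lemma is stated, and needed (in the proof of Theorem~\ref{thm:Gdelta}), precisely for the generic case $\vk\in\cG_\delta$. The $\cF_\sigma/\cG_\delta$ decomposition does not enter here; in the paper it is used for a different and genuinely harder issue, namely the uniform lower bound $h_n(x)\ge C\cdot I$ (Lemma~\ref{l:21may-1}), which is where arithmetic conditions on $\vk$ really matter. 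The remark following the lemma in the paper makes exactly this distinction: the same duality gives a lower bound on $\tr h_n(x)$ for free, but not on the matrices $h_n(x)$ themselves.
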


\begin{proof}
Acting one by one by the operators $\cM_{\hus^k\vk}$ and applying \eqref{eq:18mar-1} we obtain that $h_n(x)$ is of the form
\begin{align}\label{eq:Ben2}
	h_n(x)=\sum_{k=0}^n w^k_x(T(0)) A_k^{n},
\end{align}
where $A_k^n$ are non-negative matrix coefficients.

Further, on $[-\xi,\xi]$ the function
\begin{equation*}
w^0_x(T(0))=\frac{1}{x-T(0)}=\frac{1}{x+\lambda}
\end{equation*}
 is two-sided bounded 
\begin{equation*}
\frac{1}{\lambda+\xi}\le w^0_x(T(0))\le\frac{1}{\lambda-\xi}.
\end{equation*}
From \eqref{eqn:wxnpartial} we obtain
$$
w^k_x(T(0))
%= \frac 1{2^k}\frac{(T^{\circ k})'(T(0))}{x-T^{\circ k}(T(0))}
%= \frac 1{2^k}\sum_{T^{\circ k}(y)=x}\frac{1}{y-T(0)}
=\frac 1{2^k}\sum_{T^{\circ k}(y)=x}\frac{1}{y+\lambda}
$$
and since $\{y:T^{\circ k}(y)=x\}\subset[-\xi,\xi]$, we have
\begin{equation}\label{eq:27mar-1}
\frac{1}{\lambda+\xi}\le w^k_x(T(0))\le\frac{1}{\lambda-\xi}
\end{equation}
for all $k\ge 0$.

Using the upper bound in \eqref{eq:27mar-1} we have
\begin{equation}\label{eq:27mar-2}
\tr\, h_n(x)\le\frac{1}{\lambda-\xi}\sum_{k=0}^n \tr A_k^{n}.
\end{equation}
On the other hand by definition
\begin{align*}
\int\tr (h_n(x)\Sigma_{\hat \us^n\vk}(dx))=\int\tr  ((\fM_{n,\vk} h_0)(x)\Sigma_{\hat\us^n\vk}(dx))=\int \tr(h_0(x)\Sigma_\vk(dx))
\end{align*}
and therefore
\begin{align*}
\int \tr(h_0(x)\Sigma_\vk(dx))=\int \tr (h_n(x)\Sigma_{\hat\us^n\vk}(dx))
\ge \frac{1}{\lambda+\xi} \sum_{k=0}^n\tr(A^n_k\int \Sigma_{\hat\us^n\vk}(dx))
\end{align*}
Since all matrix measures are normalized
$$
\int \Sigma_{\hat\us^n\vk}(dx)=I,
$$
we obtain
\begin{equation}\label{eq:27mar-3}
\sum_{k=0}^n \tr A_k^{n} \le (\lambda+\xi)\int \tr(h_0(x)\Sigma_\vk(dx)).
\end{equation}
Combining \eqref{eq:27mar-2} and \eqref{eq:27mar-3} we get
\begin{equation}\label{eq:27mar-4}
\tr\, h_n(x)\le \frac{\lambda+\xi}{\lambda-\xi}\int \tr(h_0(x)\Sigma_\vk(dx)).
\end{equation}
\end{proof}

\begin{remark}
To have estimation for $\tr\,h_n(x)$ from below we write
\begin{equation}\label{eq:27mar-2b}
\tr\, h_n(x)\ge\frac{1}{\lambda+\xi}\sum_{k=0}^n \tr A_k^{n}.
\end{equation}
Also
\begin{align*}
\int \tr(h_0(x)\Sigma_\vk(dx))=\int \tr (h_n(x)\Sigma_{\hat\us^n\vk}(dx))
\le \frac{1}{\lambda-\xi} \sum_{k=0}^n\tr A^n_k
\end{align*}
In combination with \eqref{eq:27mar-2b}
\begin{equation}\label{eq:27mar-4b}
\tr\, h_n(x)\ge \frac{\lambda-\xi}{\lambda+\xi}\int \tr(h_0(x)\Sigma_\vk(dx)).
\end{equation}
However it does not mean that the matrices  $h_n(x)$ themselves are uniformly bounded from below (i.e., $h_n(x)\ge \delta\cdot I$, $\delta>0$). For special $\vk\in \cF_N$ such boundedness is shown in
Lemma \ref{l:21may-1}.
\end{remark}

\begin{theorem}\label{thm:Gdelta}
If $\vk\in\cG_\delta$, then the spectrum of $J_\vk$ is singular continuous.
\end{theorem}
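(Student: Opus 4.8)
The plan is to show that $J_\vk$ has no eigenvalues, which (since the spectrum is the zero-measure set $\sE$, hence purely singular) is equivalent to showing that the spectral matrix-measure $\Sigma_\vk$ has no atoms. The engine is the renormalization \eqref{eq:29dec-2}: I first want to track how an atom propagates under it. Recall $\Sigma_\vk(dx)=\tfrac12\fp_\vk^*(x)\,\cL_0^*\Sigma_{\hus\vk}(dx)\,\fp_\vk(x)$, and that $\cL_0^*$ acts on a measure by pushing it forward under the two inverse branches of $T$, so it sends an atom of $\Sigma_{\hus\vk}$ at $T(x_0)$ to atoms at the two preimages $\pm x_0$. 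Writing $x_j=T^{\circ j}(x_0)$ and reading off the mass at $x_0$ gives the clean forward relation $\Sigma_\vk(\{x_0\})=\tfrac12\fp_\vk^*(x_0)\,\Sigma_{\hus\vk}(\{x_1\})\,\fp_\vk(x_0)$, and iterating,
\[
\Sigma_\vk(\{x_0\})=\frac{1}{2^n}\,\Pi_n(x_0)^*\,\Sigma_{\hus^n\vk}(\{x_n\})\,\Pi_n(x_0),\qquad \Pi_n(x_0)=\fp_{\hus^{n-1}\vk}(x_{n-1})\cdots\fp_\vk(x_0).
\]
Here I use that $\fp_\vk(x_0)$ is invertible for $x_0\in\sE$, since $\det\fp_\vk=a_{\vk\mp1}/x$ and $0\notin\sE$ (the orbit of $0$ escapes), so $x_0\ne 0$ and the relevant $a$ is positive.

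Now set $P_0=\Sigma_\vk(\{x_0\})$ and suppose $P_0\neq0$. Since every $\Sigma_{\hus^n\vk}$ is normalized, $\int\Sigma_{\hus^n\vk}=I$, any atom satisfies $\Sigma_{\hus^n\vk}(\{x_n\})\le I$ as positive matrices, and the displayed identity yields the bound
\[
0\neq P_0\le \frac{1}{2^n}\,\Pi_n(x_0)^*\Pi_n(x_0)\le \frac{1}{2^n}\,\|\Pi_n(x_0)\|^2\,I\qquad\text{for all }n.
\]
Thus it suffices to prove $\liminf_{n\to\infty}2^{-n}\|\Pi_n(x_0)\|^2=0$, i.e.\ that the transfer product $\Pi_n(x_0)$ has subcritical top growth $\liminf_n\tfrac1n\log\|\Pi_n(x_0)\|<\tfrac12\log 2$ along some subsequence; this forces $P_0=0$, a contradiction. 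The determinant already carries useful information: from $\det\fp=a_{\cdot}/x$ and $\prod_{j<n}|x_j|=2^{-n}|(T^{\circ n})'(x_0)|$ one gets $|\det\Pi_n(x_0)|=\prod_{j<n}|a_{\cdot,j}|/|x_j|$, and since $\mathrm{cap}(\sE)=1$ (monic $T$) gives $(a_1\cdots a_n)^{1/n}\to1$ while $\tfrac1n\sum_{j<n}\log|x_j|\to g_\sE(0)=\int\log|t|\,d\mu_\sE(t)>0$ for a typical orbit, the product is area-contracting, $|\det\Pi_n|^{1/n}\to e^{-g_\sE(0)}<1$. Area contraction alone does not bound the top singular value, which is exactly where the splitting of the hull enters.

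The role of $\vk\in\cG_\delta$ is to supply arbitrarily long stretches of constant digits, and the point is that a stretch of zeros makes the corresponding factors $\fp_{\hus^j\vk}$ all lower triangular (and a stretch of ones upper triangular), so the partial product over such a stretch is triangular with diagonal entries $\big(\prod_j a_{\cdot,j}/x_j,\,1\big)$. By the same capacity/Green's-function computation the diagonal product contracts like $e^{-Lg_\sE(0)}$ over a stretch of length $L$, so the triangular partial product over a long stretch contributes growth rate $0<\tfrac12\log2$ rather than the crude $\log\|\fp\|$; choosing $n$ at the end of longer and longer stretches then drives $2^{-n}\|\Pi_n(x_0)\|^2$ to $0$ along a subsequence and kills the atom. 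I expect the main obstacle to be making this genuinely quantitative and uniform: one must control the off-diagonal entry accumulated over a stretch (a telescoping sum that could a priori grow), one must handle an \emph{arbitrary} eigenvalue point $x_0\in\sE$ whose forward orbit need not be $\mu_\sE$-generic (so Birkhoff averages must be replaced by uniform estimates rooted in \eqref{eq:27mar-1} and the triangular structure), and one must account for stretches that are long but not long relative to their position. This subcriticality is precisely what fails to be available when $\vk\in\cF_\sigma$, where the digits never stabilize, the products never become persistently triangular, and the growth is critically equal to $\tfrac12\log2$; that borderline case is the reason the separate matrix lower bound of Lemma~\ref{l:21may-1} is needed there, in analogy with the critical Almost Mathieu operator.
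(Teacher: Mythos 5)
Your reduction is correctly set up: the atomic version of \eqref{eq:29dec-2} does give
\[
\Sigma_\vk(\{x_0\})=\tfrac1{2^n}\,\Pi_n(x_0)^*\,\Sigma_{\hus^n\vk}(\{T^{\circ n}(x_0)\})\,\Pi_n(x_0),
\qquad \Pi_n(x_0)=\fp_{\hus^{n-1}\vk}(T^{\circ(n-1)}(x_0))\cdots\fp_\vk(x_0),
\]
and with $\Sigma_{\hus^n\vk}(\{\cdot\})\le I$ it would indeed suffice to prove $\liminf_n 2^{-n}\|\Pi_n(x_0)\|^2=0$ for every $x_0\in\sE$. But that is the entire content of the theorem in your approach, and you do not prove it; you only sketch a Lyapunov-exponent heuristic and then list yourself the three obstructions (off-diagonal accumulation over a stretch, non-$\mu_\sE$-generic orbits, stretches that are long but late). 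None of these is cosmetic. The determinant/capacity computation $|\det\Pi_n|^{1/n}\to e^{-g_\sE(0)}$ uses Birkhoff genericity of the orbit of $x_0$, which an eigenvalue point need not have; a putative eigenvalue could sit on a Lyapunov-minimizing orbit where the diagonal products over a ``triangular stretch'' do not contract at the claimed rate. Moreover the quantity you must beat sits exactly at the critical threshold: the paper's Lemma~\ref{h:24-1} (positivity of all terms in $h_n$ plus $h_0\ge (\lambda+\xi)^{-1}I$) already yields the uniform \emph{upper} bound $2^{-n}\|\Pi_n(x_0)\|^2\le C$, so you need strict decay along a subsequence, and nothing in the sketch rules out $2^{-m}\|\Pi_m\|^2$ being merely bounded while each later stretch only multiplies it by a factor you cannot show is $o(1)$ uniformly in the starting position $m$. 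As written, the proof has a genuine gap at its central step. (A small additional slip: $\fp_\vk(x)$ need not be invertible on the shifted orbit --- e.g.\ $\hus^n\vk=-1$ gives $a_{\hus^n\vk+1}=a_0=0$, and $\vk$ with eventually constant digits lie in $\cG_\delta$ --- though your forward inequality does not actually need invertibility.)

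It is worth recording how differently the paper exploits $\vk\in\cG_\delta$: not through triangularity or subcritical growth of transfer matrices, but through topology of the hull. Arbitrarily long constant digit stretches mean precisely that $\hus^n\vk$ accumulates at $0$ or at $1$ in $\bbZ_2$ along a subsequence. The paper then uses the uniformly bounded test functions $h_n$ of Lemma~\ref{h:24-1} together with the duality identity \eqref{eq:14mar-1} to show that an atom of $\Sigma_\vk$ at $x_0$ forces $\tr\Sigma_{\hus^n\vk}(V_\delta)\ge c>0$ for every small neighborhood $V_\delta$ of a limit point of $T^{\circ n}(x_0)$, and passes to the limit by weak continuity of $\vk\mapsto\Sigma_\vk$ to produce an atom of $\Sigma_0$ or $\Sigma_1$. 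The contradiction comes from the known theorem of \cite{SY} that $J_0$ (hence $J_1$) has no eigenvalues --- an external anchor your argument never invokes and would have to reprove. If you want to salvage your route, you would at minimum need a uniform (over all $x_0\in\sE$ and all stretch positions) contraction estimate for the triangular partial products, which is not available from \eqref{eq:27mar-1} alone; the paper's soft compactness-plus-continuity argument avoids this entirely.
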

\begin{proof}
Assume  that for some  $x_0\in \sE$,
\begin{equation}\label{eq:26feb-1}
\tr\Sigma_{\vk}(\{x_0\})>0.
\end{equation}
Since $\vk\in \cG_\delta$, we also have that $-\vk\in \cG_\delta$, so the dyadic expansion of $-\vk$ contains arbitrarily long series of consecutive zeros or ones. Using \eqref{eq:21may25}, in the first case there is a subsequence $I_\vk \subset \bbZ_+$ such that
$$
\lim_{n\in I_\vk}\hat\us^n\vk=0
$$
(we use this notation for the limit along the subsequence $I_\vk$). Alternatively, there is $I_\vk$ such that
$$
\lim_{n\in I_\vk}\hat\us^n\vk=1.
$$

Note that for a test function $g(x)$
\begin{align}\label{eq:14mar-1}\nonumber
\int g(x)\tr \left ((h_n(x) \Sigma_{\hat\us^n\vk}(dx)\right)
\nonumber=&
\int g(x)\tr \left ((\fM_{\hat\us^n\vk}  h_0)(x) \Sigma_{\hat\us^n\vk}(dx)\right)\\
\nonumber=&
\int \tr  \left((\fM_{\hat\us^n\vk}((g\circ T^{\circ n})h_0))(x) \Sigma_{\hat\us^n\vk}(dx)\right)\\
=&\nonumber
\int \tr\left( g(T^{\circ n}(x)) h_0(x)  
(\fM_{\hat\us^n\vk}^*\Sigma_{\hat\us^n\vk})(dx)\right)
\\=&
\int
g(T^{\circ n}(x))\tr\left( h_0(x)  
\Sigma_\vk(dx)\right)
\end{align}
We choose a subsequence $I_{\vk,x_0}\subset I_\vk$ such that
$$
\lim_{\substack{n\in I_{\vk,x_0}}} 
T^{\circ n}(x_0)=y_0.
$$
Now, let us fix an arbitrary small $\delta$-vicinity of $y_0$, $V_\delta= ({y_0-\delta},{y_0+\delta})$. 
For sufficiently large $n\in I_{\vk,x_0}$ we have
$T^{\circ n}(x_0)\in V_\delta$ and therefore by \eqref{eq:14mar-1}
\begin{equation*}\label{eq:27mar-0}
\int_{V_\delta} \tr (h_n(x)\Sigma_{\hat\us^n\vk}(dx))
=\int_{(T^{\circ n})^{-1}V_\delta} \tr (h_0(x)\Sigma_{\vk}(dx))
 \ge \tr (h_0(x_0)\Sigma_{\vk}(\{x_0\}))
\end{equation*}
In a combination with \eqref{eq:27mar-4} 
we have
$$
C\tr \Sigma_{\hat\us^n\vk}(V_\delta)
 \ge \tr (h_0(x_0)\Sigma_{\vk}(\{x_0\}))
$$
with a uniform constant $C$.
Passing to the limit in $n$, by continuity in $\bbZ_2$, we get for some value of $\e \in \{0,1\}$,
$$
C\tr \Sigma_{\e}(V_\delta)
 \ge \tr (h_0(x_0)\Sigma_{\vk}(\{x_0\})).
$$
Since $\delta$ is arbitrarily small, the spectral matrix measure $\Sigma_{\e}$ associated to the Jacobi matrix  $J_\e$ has a non-trivial mass at $y_0$. 
Since the spectrum of both matrices $J_0$ and $J_{1}$ is singular continuous,
we arrive to a contradiction. Thus, the initial assumption \eqref{eq:26feb-1} is false, that is, $\Sigma_{\vk}(\{x_0\})=0$.
\end{proof}

%%%%%%%%%
\section{Main theorem: special cases}
\subsection{More on the test functions $h_n$}

Recall
$$
 \fM_{n,\vk}=\cM_{\hus^{n-1}\vk}\cdots \cM_{\vk},\quad n\ge 1,
$$
so that 
$$
(\fM_{n,\vk}^*\Sigma_{\hat \us^n\vk})(dx)=\Sigma_{\vk}(dx).
$$
Let us define 
\[
\fP^{n+1}_{\vk}(z)=\fp_{\hat\us^n \vk}(T^{\circ n}(z))\fP^{n}_{\vk}(z),\quad \fP^{0}_{\vk}(z)=I.
\]
and
\begin{equation*}
	\Psi_\vk
	=\begin{cases}\begin{pmatrix}
			a_{\vk-1}\\
			a_{\vk} \end{pmatrix}, \quad \vk=2\us\vk,\\
		\\
		\begin{pmatrix}
			a_{\vk}\\
			a_{\vk+1} \end{pmatrix}, \quad \vk=1+2\us\vk.
	\end{cases}
	\quad
	\Upsilon_\vk^*
	=\begin{cases}
		\begin{pmatrix}
			1&0 \end{pmatrix}, \quad \vk=2\us\vk,\\
		\\
		\begin{pmatrix}
			0&1 \end{pmatrix}, \quad \vk=1+2\us\vk,
	\end{cases}
\end{equation*}
so that, see \eqref{eq:25feb-7},
$
\Phi_\vk=\Psi_\vk\Upsilon_\vk^*.
$
For two-dimensional vectors and matrices $A$ we define $A^{\star2}=AA^*$. 
\begin{lemma} Let $h_n(x)=(\fM_{n,\vk} h_0)(x)$ and $h_0(x)= w_x^0(T(0)) I$. There are constants $\{f^k_0\}_{k=1}^{n}$ (do not depend on $x$), $f_0^k=f_0^k(\vk)$, such that
\begin{align}\nonumber
h_n(x)=&
w^n_x(T(0))
\fP^n_{\vk}(T(0))  \fP^n_{\vk}(T(0))^*\\
+&\sum_{i=1}^{n}
 f^{i}_0(\vk)w^{n-i}_x(T(0))
(\fP^{n-i}_{\hat\us^{i}\vk}(T(0))
\Psi_{\hat\us^{i-1}\vk})^{\star2}
\label{eq:13jun-1c}
\end{align}
Moreover, let
\begin{equation}\label{eq:11sep-1c}
\fL_{n,i}(\vk)= w^{n-1-i}_0(T(0))
\langle
\fP^{n-1-i}_{\hat\us^{i}\vk}(T(0))\Psi_{\hat\us^{i-1}\vk},
\Upsilon_{\hat\us^{n-1}\vk}\rangle^2,\quad i<n.
\end{equation}
Then the following recurrence relation holds
\begin{equation}\label{eq:12sep-1}
f^n_0(\vk)=\sum_{i=1}^{n-1}\fL_{n,i}f^{i}_{0}(\vk)+w_0^{n-1}(T(0))\langle (\fP^{n-1}_{\vk} (T(0)))^{\star2}\Upsilon_{\hus^{n-1}\vk},\Upsilon_{\hus^{n-1}\vk}\rangle
\end{equation}

\end{lemma}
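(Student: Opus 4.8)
The plan is to establish the explicit form \eqref{eq:13jun-1c} together with the recurrence \eqref{eq:12sep-1} simultaneously, by induction on $n$, propagating the claimed structure of $h_n$ through the one-step relation $h_{n+1}=\cM_{\hus^n\vk}h_n$ and the single-term action formula \eqref{eq:18mar-1} (read with $\vk$ replaced by $\hus^n\vk$, and with $k=m+1$ when acting on a term carrying the weight $w_x^m$). For the base case $n=1$ one applies \eqref{eq:18mar-1} with $A=I$ and $k=1$: the index-raising term is $w_x^1(T(0))\fp_\vk(T(0))\fp_\vk(T(0))^*=w_x^1(T(0))\fP^1_\vk(T(0))\fP^1_\vk(T(0))^*$, which is the leading term, while the remaining term $w_0^0(T(0))w_x^0(T(0))\Phi_\vk\Phi_\vk^*$ collapses, via $\Phi_\vk=\Psi_\vk\Upsilon_\vk^*$ and $\Upsilon_\vk^*\Upsilon_\vk=1$, to $f^1_0(\vk)w_x^0(T(0))\Psi_\vk^{\star2}$ with $f^1_0(\vk)=w_0^0(T(0))$, matching \eqref{eq:12sep-1}.

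For the inductive step I would assume $h_n$ has the form \eqref{eq:13jun-1c} and apply $\cM_{\hus^n\vk}$ to each summand using \eqref{eq:18mar-1}. A generic summand $w_x^m(T(0))M$ produces two contributions: an index-raising part
\[
w_x^{m+1}(T(0))\,\fp_{\hus^n\vk}(T^{\circ(m+1)}(0))\,M\,\fp_{\hus^n\vk}(T^{\circ(m+1)}(0))^*
\]
and a decay part $w_0^m(T(0))w_x^0(T(0))\,\Phi_{\hus^n\vk}M\Phi_{\hus^n\vk}^*$. The bookkeeping is clean because the weights shift by exactly one: the leading $w_x^n$ term becomes the new leading $w_x^{n+1}$ term, the $i$-th summand (weight $w_x^{n-i}$) becomes the $i$-th summand at level $n+1$ (weight $w_x^{n+1-i}$), and every decay part lands on $w_x^0(T(0))$, i.e. the $i=n+1$ summand. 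In particular the coefficients entering the decay parts ($w_0^m(T(0))$ and the scalars below) are manifestly $x$-independent, as required.

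The two algebraic facts that make the index-raising parts reassemble correctly are cocycle identities for $\fP$. For the leading term, the defining recursion $\fP^{n+1}_\vk(z)=\fp_{\hus^n\vk}(T^{\circ n}(z))\fP^n_\vk(z)$ evaluated at $z=T(0)$, together with $T^{\circ n}(T(0))=T^{\circ(n+1)}(0)$, gives $\fp_{\hus^n\vk}(T^{\circ(n+1)}(0))\fP^n_\vk(T(0))=\fP^{n+1}_\vk(T(0))$, turning the index-raising part of the leading term into $w_x^{n+1}(T(0))\fP^{n+1}_\vk(T(0))\fP^{n+1}_\vk(T(0))^*$. For the $i$-th summand, applying the same recursion to $\fP^\bullet_{\hus^i\vk}$ and using $\hus^{n-i}(\hus^i\vk)=\hus^n\vk$ yields $\fp_{\hus^n\vk}(T^{\circ(n-i+1)}(0))\fP^{n-i}_{\hus^i\vk}(T(0))=\fP^{n-i+1}_{\hus^i\vk}(T(0))$, so the index-raising part becomes $f^i_0(\vk)w_x^{n+1-i}(T(0))(\fP^{n+1-i}_{\hus^i\vk}(T(0))\Psi_{\hus^{i-1}\vk})^{\star2}$, with the coefficient $f^i_0(\vk)$ unchanged. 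This reproduces the leading term and the summands $i=1,\dots,n$ of \eqref{eq:13jun-1c} at level $n+1$.

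Finally, I would collect all decay parts onto $w_x^0(T(0))$. Writing $\Phi_{\hus^n\vk}=\Psi_{\hus^n\vk}\Upsilon_{\hus^n\vk}^*$, each decay matrix factors as $\Phi_{\hus^n\vk}M\Phi_{\hus^n\vk}^*=\langle M\Upsilon_{\hus^n\vk},\Upsilon_{\hus^n\vk}\rangle\,\Psi_{\hus^n\vk}^{\star2}$, and since every relevant $M$ has rank-one form $vv^*$ the scalar equals $\langle v,\Upsilon_{\hus^n\vk}\rangle^2$. The decay of the $i$-th summand then contributes $f^i_0(\vk)\fL_{n+1,i}(\vk)$, precisely the form \eqref{eq:11sep-1c}, while the decay of the leading term contributes $w_0^n(T(0))\langle(\fP^n_\vk(T(0)))^{\star2}\Upsilon_{\hus^n\vk},\Upsilon_{\hus^n\vk}\rangle$; summing gives the $i=n+1$ summand $f^{n+1}_0(\vk)w_x^0(T(0))\Psi_{\hus^n\vk}^{\star2}$ with $f^{n+1}_0(\vk)$ equal to the right-hand side of \eqref{eq:12sep-1}. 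The main obstacle is purely the index bookkeeping --- tracking which weight $w_x^j$ each contribution lands on, and verifying the two $\fP$ cocycle identities and the rank-one collapse of $\Phi_{\hus^n\vk}M\Phi_{\hus^n\vk}^*$; once these are checked the induction closes and delivers both \eqref{eq:13jun-1c} and \eqref{eq:12sep-1}.
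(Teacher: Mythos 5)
Your proposal is correct and follows essentially the same route as the paper: induction on $n$ via the one-step formula \eqref{eq:18mar-1}, the cocycle recursion $\fP^{n+1}_{\vk}(z)=\fp_{\hat\us^n \vk}(T^{\circ n}(z))\fP^{n}_{\vk}(z)$ evaluated at $z=T(0)$, and the factorization $\Phi_{\hus^n\vk}=\Psi_{\hus^n\vk}\Upsilon_{\hus^n\vk}^*$ to collapse all decay terms onto $w_x^0(T(0))\Psi_{\hus^n\vk}^{\star2}$, which yields the recurrence for $f_0^{n+1}$. The index bookkeeping you describe is exactly the computation carried out in the paper's proof.
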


\begin{proof}
According to \eqref{eq:18mar-1},
we have
$$
h_1(x)=(\cM_\vk h_0)(x)=f^1_0w^0_x(T(0))\Psi_\vk\Psi_\vk^* +
w^1_x(T(0))\fp_\vk(T(0))^{\star2},
$$
with $f^1_0=w_0^0(T(0))\Upsilon^*_\vk \Upsilon_\vk=w_0^0(T(0))$. Assuming \eqref{eq:13jun-1c} and using \eqref{eq:18mar-1}, we get
\begin{align*}
\cM_{\hat\us^n \vk}h_n(x)&=w_x^{n+1}(T(0))\fP^{n+1}_{\vk}(T(0))^{\star2}\\
+&w_0^n(T(0))w_x^0(T(0))(\Phi_{\hat\us^n \vk}\fP^n_{\vk}(T(0)) )^{\star2}
\\
+&\sum_{i=1}^nf_0^iw_0^{n-i}(T(0))w_x^0(T(0))\left(\Phi_{\hat\us^n \vk}
 \fP^{n-i}_{\hat\us^{i}\vk}(T(0))
\Psi_{\hat\us^{i-1}\vk}\right)^{\star2}
 \\
+& \sum_{i=1}^nf_0^iw_x^{n-i+1}(T(0))
\left( \fp_{\hat\us^n \vk}(T^{\circ(n-i+1)}(0)) \fP^{n-i}_{\hat\us^{i}\vk}(T(0))
 \Psi_{\hat\us^{i-1}\vk}\right)^{\star2}
\end{align*}
By definition
\[
\fp_{\hat\us^n \vk}(T^{\circ(n-i+1)}(0))\fP^{n-i}_{\hat\us^{i}\vk}(T(0))=\fP^{n-i+1}_{\hat\us^{i}\vk}(T(0)).
\]
Thus, defining, see \eqref{eq:11sep-1c},
\begin{align*}
f_0^{n+1}=w_0^n(T(0))\Upsilon^*_{\hat\us^n \vk}(\fP^n_{\vk}(T(0)))^{\star2}\Upsilon_{\hat\us^n \vk}\\
+\sum_{i=1}^nf_0^iw_0^{n-i}(T(0))\Upsilon^*_{\hat\us^n \vk}(\fP^{n-i}_{\hat\us^{i}\vk}(T(0))
\Psi_{\hat\us^{i-1}\vk})^{\star2}
\Upsilon_{\hat\us^n \vk}\\
=\sum_{i=1}^{n}\fL_{n+1,i}f^{i}_{0}(\vk)+w_0^{n}(T(0))\langle (\fP^{n}_{\vk} (T(0)))^{\star2}\Upsilon_{\hus^{n}\vk},\Upsilon_{\hus^{n}\vk}\rangle
\end{align*}
\eqref{eq:13jun-1c} and \eqref{eq:12sep-1} follow by induction.
\end{proof}

\subsection{Equivalence Lemma}

\begin{lemma} \label{l:3dec} Let $\vk\in\cF_N$. 
The following statements are equivalent
\begin{itemize}
\item [(i)]Test functions $h_{n}(x)$ are uniformly bounded from below on $[-\xi,\xi]$, i.e., there exists $C_1>0$ such that $h_{n}(x)\ge C_1 I$, $x\in[-\xi,\xi]$.
\item[(ii)] The recurrence  coefficients $f^n_0(\vk)$ are  uniformly bounded from below, i.e.,
there exists $C_2>0$ such that $f^n_0(\vk)\ge C_2$.
\end{itemize}
\end{lemma}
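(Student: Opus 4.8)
The plan is to prove the equivalence by linking the uniform lower bound on the matrix-valued test functions $h_n(x)$ to the uniform lower bound on the scalar coefficients $f_0^n(\vk)$ through the explicit representation \eqref{eq:13jun-1c}. The two directions have quite different flavors, so I would treat them separately.

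For the direction (ii) $\Rightarrow$ (i), the idea is to use the expansion
\begin{align*}
h_n(x)=
w^n_x(T(0))\,(\fP^n_{\vk}(T(0)))^{\star2}
+\sum_{i=1}^{n} f^{i}_0(\vk)\,w^{n-i}_x(T(0))\,
(\fP^{n-i}_{\hat\us^{i}\vk}(T(0))\Psi_{\hat\us^{i-1}\vk})^{\star2}
\end{align*}
together with the two-sided bound \eqref{eq:27mar-1} on $w^k_x(T(0))$. Each summand is a rank-one non-negative matrix, so to bound $h_n(x)$ from below by $C_1 I$ it suffices to show that for every unit vector there is a term whose rank-one contribution is bounded below. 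Here is where $\vk\in\cF_N$ must be used: because the dyadic expansion of $\vk$ has bounded gaps, among any stretch of $N+1$ consecutive indices $i$ both the "even-type" vectors $\Psi_{\hat\us^{i-1}\vk}$ (which are of the form $\begin{pmatrix} a_{\,\cdot}\\ a_{\,\cdot}\end{pmatrix}$) will span enough directions that, after multiplication by the invertible transfer-type matrices $\fP^{n-i}_{\hat\us^i\vk}(T(0))$, the corresponding rank-one pieces sum to a matrix bounded below by a positive multiple of the identity. Combining this with $f^i_0(\vk)\ge C_2$ and the lower bound on $w^{n-i}_x$ yields $h_n(x)\ge C_1 I$ uniformly.

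For the direction (i) $\Rightarrow$ (ii), I would exploit the recurrence \eqref{eq:12sep-1} or, more directly, read off the coefficients $f^n_0(\vk)$ from $h_n$ by pairing against suitable fixed vectors. Since the $w^k_x(T(0))$ are all positive and uniformly comparable, and since $h_n(x)\ge C_1 I$ gives a positive lower bound in every direction, testing $h_n(x)$ against an appropriately chosen direction that isolates the $i$-th rank-one term (or using that $\Upsilon_{\hus^{n-1}\vk}$ picks out the leading coefficient via \eqref{eq:11sep-1c}) forces the scalar coefficient of that term to be bounded below. The key point is that the $\cF_N$ condition keeps the matrices $\fP^{n-i}_{\hat\us^i\vk}(T(0))\Psi_{\hat\us^{i-1}\vk}$ uniformly bounded in norm from above, so a lower bound on the matrix $h_n(x)$ cannot be produced by an unboundedly large coefficient compensating a degenerate direction; it must come from $f^n_0(\vk)$ itself being bounded below.

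The main obstacle I anticipate is the geometric/linear-algebraic step in (ii) $\Rightarrow$ (i): controlling how the non-negative rank-one pieces combine into a uniformly positive-definite sum. This requires showing that within bounded stretches (guaranteed by $\vk\in\cF_N$) the relevant directions $\fP^{n-i}_{\hat\us^i\vk}(T(0))\Psi_{\hat\us^{i-1}\vk}$ are uniformly non-degenerate, i.e., that they do not all cluster near a single line as $n\to\infty$. One expects this to follow from the structure of $\fp_\vk$ — the alternation between even-type and odd-type factors built into the $\cF_N$ condition — and from uniform upper and lower bounds on the Jacobi coefficients $a_\vk$ (available since $\lambda>3$ and $a_\vk$ is a continuous, non-vanishing function on the compact set $\bbZ_2\setminus\{0\}$ away from $\vk=0$). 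Making the non-degeneracy quantitative and uniform in $n$ is the technical heart of the argument.
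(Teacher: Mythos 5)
Your overall strategy is the right one, but both directions are left with genuine gaps at precisely the points where the paper's proof does its real work. In the direction (ii) $\Rightarrow$ (i), you correctly reduce to showing that a sum of non-negative rank-one terms from \eqref{eq:13jun-1c} is uniformly positive definite, but you explicitly defer the non-degeneracy step ("the technical heart") and propose to obtain it by spanning over a window of $N+1$ consecutive indices, controlling products $\fP^{n-i}_{\hat\us^i\vk}(T(0))$ and invoking an even/odd alternation. This is both more than is needed and not actually established. The paper uses only the \emph{last two} summands ($i=n$ and $i=n-1$), where the transfer matrices are trivial ($\fP^0=I$, $\fP^1=\fp$), and settles non-degeneracy by the elementary identity $\det(aa^*+bb^*)=\det^2\begin{pmatrix}a&b\end{pmatrix}$ together with an explicit computation showing that for $a=\Psi_{\hat\us\vk'}$, $b=\fp_{\hat\us\vk'}(T(0))\Psi_{\vk'}$ one has $\det^2\begin{pmatrix}a&b\end{pmatrix}=a^2_{\hat\us\vk'+\e_1}a^2_{\vk'+\e_2}$, a product of squared Jacobi coefficients. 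The $\cF_N$ hypothesis enters only through the quantitative lower bound $a_\vk^2\ge(\lambda-1)/\lambda^N$ (proved by writing $\vk=2^n\vk'$ with $n\le N$, $\vk'$ odd, and iterating the relations $a_{2\vk}^2+a_{2\vk+1}^2=\lambda$, $a_{2\vk+1}^2a_{2\vk+2}^2=a_{\vk+1}^2$). Your parenthetical justification of the lower bound on $a_\vk$ — continuity and non-vanishing on "the compact set $\bbZ_2\setminus\{0\}$" — is incorrect: $\bbZ_2\setminus\{0\}$ is not compact and $a_{2^n}\to a_0=0$, so no uniform lower bound holds without restricting the number of trailing zeros, which is exactly what $\cF_N$ provides.

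In the direction (i) $\Rightarrow$ (ii), your idea of pairing against $\Upsilon_{\hus^{n-1}\vk}$ is on target, but the argument as stated ("isolating the $i$-th rank-one term", or "a large coefficient cannot compensate a degenerate direction") does not close. What the paper proves is an exact interpolation identity,
\begin{equation*}
f^{n+1}_0(\vk)=\langle h_n(0)\,\Upsilon_{\hus^n\vk},\Upsilon_{\hus^n\vk}\rangle,
\end{equation*}
obtained by evaluating \eqref{eq:13jun-1c} at $x=0$, pairing with $\Upsilon_{\hus^n\vk}$, and observing that the resulting expression coincides term by term with the right-hand side of the recurrence \eqref{eq:12sep-1} (via the definition \eqref{eq:11sep-1c} of $\fL_{n,i}$). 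Once this identity is in hand, (i) gives $f^{n+1}_0\ge C_1$ immediately, with no need for the $\cF_N$ hypothesis in this direction. Without the identity, your heuristic does not yield a bound on $f^n_0$, since a priori the quadratic form $\langle h_n(0)\Upsilon,\Upsilon\rangle$ mixes all the coefficients $f^i_0$. To repair the proposal you should (a) replace the window/spanning argument by the two-term determinant computation and the explicit lower bound on $a_\vk^2$ for $\vk\in\cF_N$, and (b) prove the interpolation identity above rather than appealing to a separation of rank-one terms.
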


\begin{proof} 
We will use the following relations for $a_\vk^2$
\begin{align*}
	a_{2\vk}^2+a_{2\vk+1}^2&=\lambda,\\
	a_{2\vk+1}^2a_{2\vk+2}^2&=a_{\vk+1}^2,
\end{align*}
see \cite{BellBeMou82,SY}. It yields an evident  upper bound for $a_\vk$: $a_\vk^2\le\lambda$ for all $\vk\in\bbZ_2$. It is known that $a_{\vk}\le 1$ for an arbitrary even $\vk\in\bbZ_2$ \cite{BellBeMou82}. It yields a lower bound for odd $\vk$: $a^2_{\vk}\ge \lambda-1>1$.
We will show that for $\vk\in\cF_N$  coefficients $a_{\vk}$ are uniformly bounded from below
\begin{equation}
\label{eq:28aug-1}
a_{\vk}^2\ge C_3=\frac{\lambda-1}{\lambda^N},\quad \lambda>2.
\end{equation}
Using the above estimates, the proof  follows by induction. Assume that $\vk'$ is odd. As the base of induction we have
$$
a^2_{2\vk'}
=\frac{a^2_{\vk'}}{a^2_{2\vk'-1}}
\ge \frac{\lambda-1}\lambda.
$$
Then we use  induction
\begin{equation}
\label{eq:28aug-2}
a^2_{2^{n+1}\vk'}=\frac{a^2_{2^n\vk'}}{a^2_{2^{n+1}\vk'-1}}\ge\frac{\lambda-1}{\lambda^{n}}\frac 1\lambda=\frac{\lambda-1}{\lambda^{n+1}}.
\end{equation}
Since an arbitrary element of $\cF_N$ contains at most $N$ consecutive zeros in its symbolic representation we have that $\vk=2^n\vk'$, were $0\le n\le N$ and $\vk'$ is odd.
Thus 
\eqref{eq:28aug-2} implies \eqref{eq:28aug-1}.

 (ii) $\Rightarrow$ (i). 
Since for $x\in[-\xi,\xi]$, all terms in the sum \eqref{eq:13jun-1c} are positive we can estimate $h_{n}(x)$ by two last summands
\begin{align*}
h_{n}(x) & \ge\nonumber
 f^{n}_0(\vk)w^{0}_x(T(0))
\Psi_{\hat\us^{n-1}\vk}
\Psi_{\hat\us^{n-1}\vk}^*\\
& \;\; + f^{n-1}_0(\vk)w^{1}_x(T(0))
\fp_{\hat\us^{n-1}\vk}(T(0))
\Psi_{\hat\us^{n-2}\vk}(
\fp_{\hat\us^{n-1}\vk}(T(0))
\Psi_{\hat\us^{n-2}\vk})^* \\
& \ge C_2\inf_{x\in[-\xi,\xi]}\{w^{0}_x(T(0)),w^{1}_x(T(0))\}\\
& \;\; \times\left(\Psi_{\hat\us^{n-1}\vk}
\Psi_{\hat\us^{n-1}\vk}^*
+
\fp_{\hat\us^{n-1}\vk}(T(0))
\Psi_{\hat\us^{n-2}\vk}(
\fp_{\hat\us^{n-1}\vk}(T(0))
\Psi_{\hat\us^{n-2}\vk})^*\right)
\end{align*}
Using the lower bound in \eqref{eq:27mar-1}, this implies
\begin{align}
h_{n}(x)
\ge \frac{C_2}{\lambda+\xi}
\left(\Psi_{\hat\us\vk'}
\Psi_{\hat\us\vk'}^*
+
\fp_{\hat\us\vk'}(T(0))
\Psi_{\vk'}(
\fp_{\hat\us\vk'}(T(0))
\Psi_{\vk'})^*\right),
\label{eq:28aug-3}
\end{align}
where $\vk'=\hus^{n-2}\vk$.

Now we note that for two dimensional vectors $a$ and $b$
$$
\det(a a^*+b b^*)={\det}^2\begin{pmatrix} a& b
\end{pmatrix},
$$
and therefore
$$
a a^*+b b^*\ge\frac{\det^2 \begin{pmatrix} a& b
\end{pmatrix}
}{\|a\|^2+\|b\|^2} I.
$$
For the vectors 
$$
a=\Psi_{\hat\us\vk'}\quad\text{and}\quad b=\fp_{\hat\us\vk'}(T(0))\Psi_{\vk'},
$$
by \eqref{eq:28aug-1}, we have
$$
{\det}^2\begin{pmatrix} a& b
\end{pmatrix}= a^2_{\hat\us\vk'+\e_1}a^2_{\vk'+\e_2}\ge C_3^2,\quad \e_{1},\e_{2}\in\{-1,0,1\}.
$$
For the norms of these vectors we have
$$
\|a\|^2=\|\Psi_{\hus\vk'}\|^2\le 2\lambda
$$
and
$$
\|b\|^2\leq\left(1+\frac{\|\Psi_{\hus\vk'}\|^2}{T(0)^2}\right)\|\Psi_{\vk'}\|^2\le 
\left(1+\frac{2}{\lambda}\right)2\lambda.
$$
Combining all these estimates with \eqref{eq:28aug-1} we get (i) with 
$$
C_1=\frac{C_2 C_3^2}{4(\lambda+\xi)(\lambda+1)}.
$$

 (i) $\Rightarrow$ (ii). By \eqref{eq:13jun-1c}, we have
 \begin{align}\nonumber
\langle h_{n}(0)\Upsilon_{\hus^{n}\vk}, \Upsilon_{\hus^{n}\vk}\rangle & =
w^n_0(T(0))
\langle  \fP^n_{\vk}(T(0))^*\Upsilon_{\hus^{n}\vk},\fP^n_{\vk}(T(0))^*\Upsilon_{\hus^{n}\vk}\rangle
\\
& \;\; +\sum_{i=1}^{n}
 f^{i}_0(\vk)w^{n-i}_0(T(0))
\langle\fP^{n-i}_{\hat\us^{i}\vk}(T(0))
\Psi_{\hat\us^{i-1}\vk},\Upsilon_{\hus^{n}\vk}\rangle^2
\label{eq:13aug-1c}.
\end{align}
Due to \eqref{eq:12sep-1}
\begin{align}\nonumber
f^{n+1}_0(\vk)=w_0^{n}(T(0))\langle (\fP^{n}_{\vk} (T(0)))^{\star 2}\Upsilon_{\hus^{n}\vk},\Upsilon_{\hus^{n}\vk}\rangle\\
\label{eq:13sep-1c}
+\sum_{i=1}^{n}\fL_{n+1,i}(\vk)f^{i}_{0}(\vk).
\end{align}
Since 
\begin{equation*}
\fL_{n+1,i}(\vk)= w^{n-i}_0(T(0))
\langle
\fP^{n-i}_{\hat\us^{i}\vk}(T(0))\Psi_{\hat\us^{i-1}\vk},
\Upsilon_{\hat\us^{n}\vk}\rangle^2
\end{equation*}
the right hand sides in 
 \eqref{eq:13aug-1c} and \eqref{eq:13sep-1c} coincide and we get the following interpolation property for  $f^n_0(\vk)$ 
\begin{equation}\label{eq:13sep-4}
 f^{n+1}_0(\vk)=\langle h_{n}(0)\Upsilon_{\hus^n\vk},\Upsilon_{\hus^n\vk}\rangle.
 \end{equation}
Consequently  $f^n_0(\vk)\ge C_1$ and the lemma is proved.
\end{proof}

\subsection{Uniform boundedness from below}

First let us mention {\em compactness} arguments. Let $K$ be a compact (symmetric ellipse)  containing the interval $[-\xi,\xi]$ which does not contain $-\lambda$. 
Then for all $n\ge n_0(K)$ and $z\in K$
$$
\frac 1 2 \le\left|\frac{z-T^{\circ n}(0)}{T^{\circ n}(0)}\right|\le 2.
$$
Due to the relation 
\[
w_z^n(T(0))=\frac{-T^{\circ(n+1)}(0)}{z-T^{\circ(n+1)}(0)}w_0^n(T(0)),
\] 
we can extend the uniform estimation of $w_z^n(T(0))$ for all $z\in K$ using \eqref{eq:27mar-1}. 
By \eqref{eq:Ben2}
$$
\|h_n(z)\|\le C(K)\sum_{k=0}^n \tr\, A^n_k,
$$
and by \eqref{eq:27mar-3} $\|h_n(z)\|\le C_1(K)$.

Now we mention a kind of \emph{weak convergence} for the sequence $\{f_0^n\}$ in the class $\cF_N$.  

\begin{lemma}\label{lem:43}
Let $\vk \in \cF_N$. Assume that $f^n_0\to 0$ along a certain subsequence of indices $\tilde I$. then for an arbitrary fixed $n_0$ and a sequence $k(n)\in \bbN$ with $k(n)\leq n_0$ we have
	$$\lim\limits_{\substack{  n\in \tilde I}}f^{n-k(n)}_0= 0.
	$$
\end{lemma}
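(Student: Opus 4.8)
We have $\vk \in \cF_N$, and a subsequence $\tilde I$ along which $f_0^n \to 0$. We want: for any fixed $n_0$ and any sequence $k(n) \le n_0$, also $f_0^{n-k(n)} \to 0$ along $\tilde I$. So this is a kind of "backward propagation" of the decay to zero: if the coefficients get small at index $n$, they were already small at indices within a bounded window $\{n-n_0, \ldots, n\}$.

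**Key tool.** The recurrence \eqref{eq:12sep-1},
\[
f_0^n(\vk) = \sum_{i=1}^{n-1} \fL_{n,i} f_0^i(\vk) + w_0^{n-1}(T(0)) \langle (\fP_\vk^{n-1}(T(0)))^{\star 2} \Upsilon_{\hus^{n-1}\vk}, \Upsilon_{\hus^{n-1}\vk}\rangle,
\]
expresses $f_0^n$ as a nonnegative combination of the earlier $f_0^i$ plus a strictly positive term. All summands are nonnegative (the $\fL_{n,i}$ are squares times positive $w$-factors, and the $f_0^i$ are positive), so $f_0^n$ dominates any single term in the sum.

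**Plan.** First I would think about it contrapositively. Suppose the conclusion fails: then along some sub-subsequence, for some fixed offset $j$ with $0 \le j \le n_0$, the quantity $f_0^{n-j}$ stays bounded below by a constant $c > 0$ (after passing to a subsequence so the varying $k(n)$ stabilizes to a fixed value $j$; this uses finiteness of the window). The strategy is then to show that a lower bound $f_0^{m} \ge c$ at index $m = n-j$ forces a lower bound on $f_0^{m'}$ for some $m'$ in a controlled range ahead of $m$, propagating upward to contradict $f_0^n \to 0$. The mechanism: from the recurrence applied at a later index $p > m$, the term $i = m$ in the sum contributes $\fL_{p,m} f_0^m \ge c \,\fL_{p,m}$, so $f_0^p \ge c\, \fL_{p,m}$. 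It thus suffices to show that one can choose $p$ within a bounded distance ahead of $m$ (bounded uniformly because $\vk \in \cF_N$) so that $\fL_{p,m}$ is bounded below by a positive constant depending only on $N, \lambda$.

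**The main obstacle.** The crux is lower-bounding $\fL_{p,m}(\vk) = w_0^{p-1-m}(T(0)) \langle \fP_{\hus^m\vk}^{p-1-m}(T(0)) \Psi_{\hus^{m-1}\vk}, \Upsilon_{\hus^{p-1}\vk}\rangle^2$ away from zero for a well-chosen $p$. The $w$-factor is already controlled below by \eqref{eq:27mar-1}. The delicate part is the inner-product squared: I would argue that because $\vk \in \cF_N$ (so dyadic stretches have length at most $N$), within $N+1$ steps one of the digits $\e_0^{p-1}$ switches in a way that aligns $\Upsilon_{\hus^{p-1}\vk}$ with a component of $\fP_{\hus^m\vk}^{p-1-m}(T(0)) \Psi_{\hus^{m-1}\vk}$ that is bounded below; here I would invoke the uniform lower bound $a_\vk^2 \ge C_3 = (\lambda-1)/\lambda^N$ from Lemma~\ref{l:3dec} together with explicit control of the transfer matrices $\fp$ and hence $\fP$ on the compact set, exactly as in the Equivalence Lemma. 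Thus one extracts $p - m \le N+1$ with $\fL_{p,m} \ge c' > 0$. Since the offset $j \le n_0$ and the forward step $p - m \le N+1$ are both bounded, iterating this finitely many times reaches an index equal to (or within bounded distance of, and comparable to) the original $n$, producing $f_0^n \ge c'' > 0$ along the sub-subsequence, contradicting $f_0^n \to 0$.

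**Summary of steps.** (1) Pass to a sub-subsequence on which $k(n) \equiv j$ is constant, reducing to a fixed offset. (2) Assume for contradiction $f_0^{n-j} \ge c$ along this set. (3) Using \eqref{eq:12sep-1}, bound $f_0^p \ge c \,\fL_{p,m}$ for $m = n-j$ and suitable $p > m$. (4) Prove the key lemma: for $\vk \in \cF_N$ there exists $p$ with $m < p \le m + N + 1$ and $\fL_{p,m}(\vk) \ge c' > 0$, using the digit-switching forced by the $\cF_N$ condition and the bounds $a_\vk^2 \ge C_3$ from Lemma~\ref{l:3dec}. (5) Iterate to propagate the lower bound forward to index $n$ (adjusting constants), contradicting $f_0^n \to 0$. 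I expect step (4) to be the main technical obstacle, as it is where the combinatorial $\cF_N$ structure must be converted into a quantitative, $x$-independent lower bound on the transfer-matrix inner product.
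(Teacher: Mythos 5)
You have correctly identified the mechanism the paper uses: positivity of the recurrence \eqref{eq:12sep-1} gives $f_0^p\ge \fL_{p,i}f_0^i$, and the $\cF_N$ hypothesis via \eqref{eq:28aug-1} supplies a lower bound on the relevant coefficient. But your execution has two gaps, and both disappear once you notice that the \emph{one-step} coefficient is already bounded below. Indeed, $\fL_{n,n-1}=w_0^0(T(0))\,\langle \Psi_{\hus^{n-2}\vk},\Upsilon_{\hus^{n-1}\vk}\rangle^2$, and this inner product is always a single Jacobi coefficient $a_\mu$ with $\mu$ within distance one of $\hus^{n-2}\vk$; by \eqref{eq:28aug-1} it is bounded below by a constant depending only on $N$. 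Hence $f_0^n\ge C f_0^{n-1}$ for all $n$, and iterating at most $n_0$ times gives $f_0^{n-k(n)}\le C^{-n_0}f_0^n\to 0$ along $\tilde I$. That is the paper's entire proof. Your step (4), which you flag as the main obstacle --- searching up to $N+1$ steps ahead for a $p$ with $\fL_{p,m}$ bounded below --- is therefore solving a harder problem than necessary, and you do not actually prove it.

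The second, more serious gap is in your step (5): if the forward propagation proceeds in variable steps of size up to $N+1$, the iteration starting from $m=n-j$ need not land on the index $n$ itself. Reaching some $p$ with $n<p\le n+N$ and $f_0^{p}\ge c''$ does not contradict $f_0^n\to 0$, because the recurrence only gives lower bounds on later coefficients in terms of earlier ones; to transfer the bound from $p$ back to $n$ you would again need precisely the one-step estimate $f_0^{p}\ge C f_0^{p-1}$ (i.e., an upper bound on $f_0^{p}$ in terms of $f_0^n$ is what is missing, and "comparable to" does not supply it). Also, the contradiction framing and the subsequence extraction to stabilize $k(n)$ are unnecessary overhead: the direct inequality $f_0^n\ge C^{k(n)}f_0^{n-k(n)}\ge C^{n_0}f_0^{n-k(n)}$ (for $C\le 1$) handles the varying $k(n)$ uniformly.
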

\begin{proof}
	This follows from the estimate
	$$
	f^n_0\ge \cL_{n,n-1}f_0^{n-1}=w_0^0(T(0))\langle \Psi_{\hus^{n-2}\vk},\Upsilon_{\hus^{n-2}\vk}\rangle f_0^{n-1}.
	$$
	Therefore, by \eqref{eq:28aug-1}, $f^n_0\ge C f_0^{n-1}$ with a constant $C>0$ depending only on $N$.
\end{proof}

\begin{lemma}\label{l:21may-1}
Let $\vk\in\cF_N$. Then there exists $C>0$ such that $h_n(x)\ge C\cdot I$ for all $n$ and $x\in[-\xi,\xi]$.
\end{lemma}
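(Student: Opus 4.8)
By the Equivalence Lemma~\ref{l:3dec} it suffices to establish statement (ii), namely that the scalar coefficients $f^n_0(\vk)$ are uniformly bounded from below. I would argue by contradiction: suppose $\inf_n f^n_0(\vk)=0$, so that there is a subsequence $\tilde I\subset\bbN$ with $f^n_0\to 0$ along $\tilde I$. By Lemma~\ref{lem:43}, for any fixed $n_0$ and any $k(n)\le n_0$ we then also have $f^{n-k(n)}_0\to 0$ along $\tilde I$; thus a whole window of bounded length below the index $n$ consists of vanishingly small values. The plan is to feed this into the renormalization $h_{m}=\cM_{\hus^{m-1}\vk}h_{m-1}$ and extract a limiting object that is forced to be degenerate, contradicting conservation of mass.

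The compactness step is the backbone. The functions $h_m$ are uniformly bounded on a fixed ellipse $K\supset[-\xi,\xi]$ (the compactness paragraph preceding this lemma) and, being rational with poles only at the points $T^{\circ k}(0)\in\bbR\setminus[-\xi,\xi]$, are equicontinuous on $[-\xi,\xi]$; simultaneously the symbols $\hus^m\vk$ live in the compact set $\cF_N\subset\bbZ_2$. Passing to a further subsequence I would arrange that, for every fixed $j\ge 0$, $h_{n-j}\to H_j$ uniformly on $[-\xi,\xi]$ and $\hus^{n-j}\vk\to\eta_j$, where $(\eta_j)_{j\ge0}\subset\cF_N$ is a backward $\hus$-orbit and the limits satisfy the limiting renormalization $H_j=\cM_{\eta_{j+1}}H_{j+1}$. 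Each $H_j\ge 0$. Now the interpolation identity \eqref{eq:13sep-4} gives $f^{n-j+1}_0=\langle h_{n-j}(0)\Upsilon_{\hus^{n-j}\vk},\Upsilon_{\hus^{n-j}\vk}\rangle$, which tends to $0$ by Lemma~\ref{lem:43}, so the $\Upsilon_{\eta_j}$-diagonal entry of $H_j(0)$ vanishes; on the other hand the trace lower bound \eqref{eq:27mar-4b} evaluated at $x=0$ shows $\tr H_j(0)\ge c_0>0$. Since a positive semidefinite matrix with a vanishing diagonal entry has the corresponding row and column equal to zero, each $H_j(0)$ is a \emph{rank-one} matrix, a positive multiple of $\Upsilon_{\eta_j}^{\perp}(\Upsilon_{\eta_j}^{\perp})^*$.

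The heart of the argument is to propagate this rank-one degeneracy. Writing the limiting renormalization at $x=0$ as the sum over the two preimages $y=\pm\sqrt{\lambda}$ of the positive semidefinite matrices $\tfrac12\fp_{\eta_{j+1}}(y)H_{j+1}(y)\fp_{\eta_{j+1}}(y)^*$, rank one of the sum forces each summand to have range contained in $\mathrm{span}(\Upsilon_{\eta_j}^{\perp})$. Because $\vk\in\cF_N$ has runs of length at most $N$, the lower bound \eqref{eq:28aug-1}, $a_\mu^2\ge C_3>0$, together with $T'(\pm\sqrt{\lambda})\neq0$ guarantees that $\fp_{\eta_{j+1}}(\pm\sqrt{\lambda})$ is invertible with uniformly controlled condition number; hence $H_{j+1}(\pm\sqrt{\lambda})$ must itself be of rank $\le 1$, with a prescribed range direction. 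Iterating $k$ times shows $H_{j+k}$ has rank $\le 1$ at every point of $(T^{\circ k})^{-1}(0)$. Running a diagonal subsequence over both the index $j$ and the preimage depth $k$, and using density of the backward orbit $\bigcup_{k\ge0}(T^{\circ k})^{-1}(0)$ in $\sE$, I would produce a single limiting test function of rank $\le 1$ on a dense subset of $\sE$, hence, by continuity, of rank $\le1$ everywhere.

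The final contradiction is that such a limit cannot arise from the $h_m$. By \eqref{eq:13jun-1c} each $h_m$ is a positive combination of rank-one matrices whose directions are the vectors $\fP^{m-i}_{\hus^{i}\vk}(T(0))\Psi_{\hus^{i-1}\vk}$; being rank one in the limit means all these directions align, which is exactly the scenario excluded by the $(ii)\Rightarrow(i)$ mechanism of Lemma~\ref{l:3dec}, where two consecutive such directions are shown to be linearly independent with $\det^2\ge C_3^2$ on $\cF_N$. Equivalently, it contradicts $\int\Sigma_{\eta_j}(dx)=I$ being full rank against a rank-one weight in the conserved pairing $\int\tr(h_m\,\Sigma_{\hus^m\vk})=\mathrm{const}>0$. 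I expect the main obstacle to be making this last step fully rigorous: extracting a \emph{single} limiting rank-one function via the double diagonal argument and pinning down the precise conserved quantity that the rank-one degeneracy violates. This is precisely where the bounded-run hypothesis $\vk\in\cF_N$ (through the well-conditioning of $\fp$) is indispensable, which is why the complementary case $\cG_\delta$ is handled separately.
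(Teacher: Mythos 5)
Your opening moves match the paper's: argue by contradiction, use Lemma~\ref{l:3dec} to reduce to $f_0^n\to 0$ along a subsequence, invoke Lemma~\ref{lem:43} to kill a whole window of coefficients, and use compactness of $\bbZ_2$ and of the uniformly bounded family $\{h_n\}$ on the ellipse $K$ to extract limits $H_j=\lim h_{n-j}$ with $\langle H_j(0)\Upsilon_{\eta_j},\Upsilon_{\eta_j}\rangle=0$, hence (by positive semidefiniteness) $H_j(0)$ of rank $\le 1$. From there you diverge, and the final step has a genuine gap. Your proposed contradiction is that the limit is rank $\le 1$ everywhere, but that is \emph{not} a contradiction: a nonzero rank-one positive semidefinite weight $G(x)=v(x)v(x)^*$ is perfectly consistent with the conserved pairing $\int\tr(G\,\Sigma)=\mathrm{const}>0$ against the normalized measures $\int\Sigma_{\eta_j}(dx)=I$, and also with the trace lower bound \eqref{eq:27mar-4b}. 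Your alternative contradiction via the linear independence $\det^2\begin{pmatrix}a&b\end{pmatrix}\ge C_3^2$ from the $(ii)\Rightarrow(i)$ part of Lemma~\ref{l:3dec} also fails, because in the degeneration scenario those two consecutive rank-one terms carry the coefficients $f_0^{n}$ and $f_0^{n-1}$, which tend to zero by Lemma~\ref{lem:43}; they contribute nothing to the limit, so their transversality is vacuous there. You must show the limit is \emph{zero}, not merely rank one, to contradict $\tr h_\infty(x)\ge C>0$.

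The paper closes the argument with two ingredients you are missing. First, the parity structure: by Lemma~\ref{lem:26}, $\kappa(\vk)\in\cF_{N+1}$, so one can choose the subsequence with $\hus^{n}\vk$ odd and $\hus^{n-1}\vk$ even; then the interpolation identity \eqref{eq:13sep-4} forces the degenerate direction of $\lim h_n$ to be $\Upsilon_{\vk'}=\binom{0}{1}$ and that of $\lim h_{n-1}$ to be $\Upsilon_{2\vk'}=\binom{1}{0}$ --- two \emph{transversal} degeneracies at consecutive levels. Second, instead of your propagation through preimages of $0$ (which only controls the rank on $\cup_k(T^{\circ k})^{-1}(0)$ and requires a delicate double-diagonal argument), the paper upgrades the single-point identity $\langle h_\infty(0)\Upsilon,\Upsilon\rangle=0$ to an identity on all of $[-\xi,\xi]$ by a monotonicity observation: after discarding the $k=0$ term of \eqref{eq:Ben2} (legitimate since its coefficient $A_0^n=f_0^n\Psi^{\star 2}\to 0$), each remaining $w_x^k(T(0))$, $k\ge1$, is increasing in $x$, so the nonnegative, nondecreasing function $\langle h_\infty(x)\Upsilon,\Upsilon\rangle$ vanishing at $x=0$ vanishes on $[-\xi,0]$ and hence, by analyticity, everywhere. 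Then one application of the renormalization, $h_\infty=\cM_{2\vk'}\tilde h_\infty$, together with $\fp_{2\vk'}(y)\binom{0}{1}=\binom{0}{1}$, shows the range of $h_\infty$ lies in $\mathrm{span}\binom{0}{1}$ while its $(2,2)$ entry is already zero, so $h_\infty\equiv 0$, contradicting \eqref{eq:27mar-4b}. If you want to salvage your route, you would need to track not just the rank but the range direction of the limit and still exploit the odd/even alternation; as written, the argument does not reach a contradiction.
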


\begin{proof}
Assuming the contrary, due to Lemma \ref{l:3dec} (specifically see \eqref{eq:13sep-4}),  we have a subsequence of indices $\tilde I$ such that
\begin{equation}\label{eq:3dec4-1}
\lim_{\substack{n\in \tilde I}}f^{n+1}_0=\lim_{n\in \tilde I} \langle h_n(0)\Upsilon_{\hus^n\vk}, \Upsilon_{\hus^n\vk}\rangle
=0.
\end{equation}
By Lemma \ref{lem:26} and Lemma \ref{lem:43} we can pass to another subsequence $ I$ such that $\hus^n\vk$ is odd, $\hus^{n-1}\vk$ is even and  \eqref{eq:3dec4-1} still holds for $n\in I$. 

Due to compactness of $\bbZ_2$ we have $\vk'$ and a subsequence $I_{\vk'}\subset I$ such that
\begin{equation}\label{eq:3dec4-2}
\lim_{n\in I_{\vk'}} \hus^n\vk=\vk'.
\end{equation}
Finally, due to compactness of $H^\infty(K)$ we choose a subsequence $I'_{\vk'}\subset I_{\vk'}$ such that for some $h_\infty \in H^\infty(K)$, uniformly on $K$,
\begin{equation}\label{eq:3dec4-3}
\lim_{ n\in I'_{\vk'}} h_n(z)=h_\infty(z).
\end{equation}

We will show that the matrix function $h_\infty(z)$ is of the form
\begin{equation}\label{eq:3dec4-4}
h_\infty(z)=\begin{pmatrix} h^\infty_{11}(z)&0\\0& 0 \end{pmatrix}.
\end{equation}
We note that the function $w_x^{0}(T(0)) = 1/(x+\lambda)$ is decreasing on $x \in [-\xi,\xi]$, but that for $k \ge 1$, the function $w_x^{k}(T(0))$ is increasing on $x \in [-\xi,\xi]$, because $T^{\circ j}(0) > 0$ for $j \ge 2$. For this reason, in \eqref{eq:Ben2} we separate the $k=0$ term and introduce
$$
h^\sharp_n(x)=\sum_{k=1}^n w^k_x(T(0)) A^n_k.
$$
By Lemma \ref{lem:43}, \eqref{eq:3dec4-1} implies $f^n_0\to 0$ for $n\in I'_{\vk'}, n\to\infty$. Since
by \eqref{eq:13jun-1c}, 
$$A^n_0=
f^{n}_0(\vk)
\Psi_{\hat\us^{n-1}\vk}\Psi_{\hat\us^{n-1}\vk}^*
$$
 we get by \eqref{eq:3dec4-3}
\begin{equation*}\label{}
\lim_{n\in I'_{\vk'} } h^\sharp_n(z)=h_\infty(z).
\end{equation*}
Since $h^\sharp_n(x)$ are positive and increasing (in $x$) on $[-\xi,\xi]$ we get that $h_\infty(x)$ is non-negative and non-decreasing in this interval.
By our choice of $I$ all terms $\hus^n\vk$ are odd. Therefore
$$
\Upsilon_{\hus^n\vk}=\Upsilon_{\vk'}=\begin{pmatrix} 0\\ 1
\end{pmatrix}.
$$
By \eqref{eq:3dec4-1}, we have
$$
\langle h_\infty(0)\Upsilon_{\vk'}, \Upsilon_{\vk'}\rangle
=0.
$$
By non-negativity and non-decreasing property we get
\begin{equation}\label{eq:3dec4-5}
\langle h_\infty(x)\Upsilon_{\vk'}, \Upsilon_{\vk'}\rangle
=0, 
\end{equation}
for all  $x\in[-\xi,0]$.
By analyticity \eqref{eq:3dec4-4} holds for all $x\in[-\xi,\xi]$. Non-negativity of $h_\infty(x)$ implies that off-diagonal entries also vanish. Thus 
\eqref{eq:3dec4-4} is proved.

Now we use that  $\hus^{n-1}\vk$ is even. Note that in particular
$$
\lim_{n\in I'_{\vk'}} \hus^{n-1}\vk=2\vk' .
$$
By passing to a further subsequence $I''_{\vk'} \subset I'_{\vk'}$ so that
\[
\tilde h_\infty(x)=\lim_{n\in I''_{\vk'}}h_{n-1}(x),
\]
and repeating the above arguments, we use
\[
\lim_{n\in I''_{\vk'}} \langle h_{n-1}(0)\Upsilon_{\hus^{(n-1)}\vk}, \Upsilon_{\hus^{(n-1)}\vk}\rangle = \lim_{n\in I''_{\vk'}} f_0^n = 0
\]
and using $\Upsilon_{\hus^{(n-1)}\vk} = \Upsilon_{2\vk'} = \binom 10$, we conclude as before that
\[
\tilde h_\infty(x) =\begin{pmatrix} 0 &0\\0& \tilde h^\infty_{22}(x)\end{pmatrix}.
\]

On the other hand
$$
h_n(x)=(\cM_{\hus^{n-1} \vk} h_{n-1})(x).
$$
Passing to the limit we have
\begin{align*}
h_\infty(x)=(\cM_{2 \vk'} \tilde h_{\infty})(x)=\frac 1 2\sum_{T(y)=x}
\fp_{2\vk'}(y) \begin{pmatrix}0 \\ 1\end{pmatrix}
\tilde h^\infty_{22}(y)
\begin{pmatrix} 0 &1 \end{pmatrix}\fp^*_{2\vk'}(y)\\
=\frac 1 2\sum_{T(y)=x}
\begin{pmatrix}0 \\ 1\end{pmatrix}
\tilde h^\infty_{22}(y)
\begin{pmatrix} 0 &1 \end{pmatrix}.
\end{align*}
Thus $h^\infty_{11}(x)=0$, i.e., $h^\infty(x)=0$, which contradicts the uniform bound $\tr\, h_\infty(x)\ge C>0$, see \eqref{eq:27mar-4b}.
\end{proof}

\subsection{Proof of the Main Theorem}

\begin{theorem}\label{thm:Fsigma}
If $\vk\in\cF_N$, then the spectrum of $J_\vk$ is singular continuous.
\end{theorem}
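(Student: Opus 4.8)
The plan is to mirror the argument for the generic case (Theorem \ref{thm:Gdelta}), but now exploiting the crucial extra input provided by Lemma \ref{l:21may-1}: for $\vk\in\cF_N$ the test functions $h_n(x)$ are uniformly bounded \emph{from below}, $h_n(x)\ge C\cdot I$ on $[-\xi,\xi]$. As before, I would argue by contradiction and assume $\tr\Sigma_\vk(\{x_0\})>0$ for some $x_0\in\sE$. The key identity remains \eqref{eq:14mar-1}, which transports spectral mass along the iteration: for any continuous test function $g$,
\[
\int g(x)\tr\left(h_n(x)\Sigma_{\hat\us^n\vk}(dx)\right)=\int g(T^{\circ n}(x))\tr\left(h_0(x)\Sigma_\vk(dx)\right).
\]
Choosing $g$ supported near a limit point $y_0$ of a subsequence of $T^{\circ n}(x_0)$, the right-hand side detects the atom at $x_0$ and is bounded below by $\tr(h_0(x_0)\Sigma_\vk(\{x_0\}))>0$, while the left-hand side is controlled by $\Sigma_{\hat\us^n\vk}(V_\delta)$.

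\textbf{The main structural difference from the generic case.} In Theorem \ref{thm:Gdelta}, one used that $\hat\us^n\vk$ subconverges to $0$ or $1$, and that $J_0,J_1$ are known to have purely singular continuous spectrum, to derive the contradiction. For $\vk\in\cF_N$ this limiting-to-$\{0,1\}$ mechanism is unavailable, since by Lemma \ref{lem:26} the digits of $\kappa(\vk)$ still avoid arbitrarily long constant stretches, so $\hat\us^n\vk$ stays in some $\cF_{N+1}$ and cannot approach $0$ or $1$. This is the essential obstacle, and it is precisely why the lower bound of Lemma \ref{l:21may-1} is needed. The strategy is to run the transport identity and then extract, via compactness of $\bbZ_2$, a limit $\vk'=\lim_{n\in I'}\hat\us^n\vk$, together with a weak-$*$ limit of the measures $\Sigma_{\hat\us^n\vk}\to\Sigma_{\vk'}$. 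The uniform lower bound $h_n(x)\ge C\cdot I$ converts the bound on $\tr(h_n\,\Sigma_{\hat\us^n\vk})(V_\delta)$ into a genuine lower bound on the \emph{total} mass $\tr\Sigma_{\hat\us^n\vk}(V_\delta)$, and passing to the limit produces an atom of $\Sigma_{\vk'}$ at $y_0$:
\[
C'\,\tr\Sigma_{\vk'}(\{y_0\})\ge \tr\left(h_0(x_0)\Sigma_\vk(\{x_0\})\right)>0.
\]
Thus the assumption ``$\Sigma_\vk$ has an atom'' propagates to ``$\Sigma_{\vk'}$ has an atom'' for some $\vk'\in\cF_{N+1}$.

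\textbf{Closing the loop.} The remaining task is to turn this propagation into a contradiction. Since $\vk'$ again lies in a class $\cF_{N'}$, one cannot simply invoke a known eigenvalue-free model. Instead I would iterate: the map $\vk\mapsto\hat\us\vk$ acts on the compact space $\bbZ_2$, and atoms cannot be created from nothing because the total mass of each $\Sigma_\vk$ is normalized to $I$. The cleanest route is to track the \emph{size} of the largest atom as one shifts, using that $T^{\circ n}$ expands away from the postcritical orbit while the transport identity preserves (up to the uniform constants $\tfrac{\lambda-\xi}{\lambda+\xi}$ and $\tfrac{\lambda+\xi}{\lambda-\xi}$ from \eqref{eq:27mar-4} and \eqref{eq:27mar-4b}) a comparison between the atomic mass at $x_0$ for $\vk$ and the atomic mass at accumulation points for the shifted phase. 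I expect the hardest part to be making the accumulation argument quantitative enough to preclude an infinite descent of atoms of comparable size: one must rule out a shift-invariant family of phases all carrying atoms, which would contradict either the normalization $\int\Sigma_{\hat\us^n\vk}=I$ together with singularity of the common spectrum $\sE$, or the strict expansion of $T$ on $\sE\setminus\{0\}$ forcing the atom locations to spread and dilute. Assembling these pieces yields $\tr\Sigma_\vk(\{x_0\})=0$, completing the proof that the spectrum of $J_\vk$ is purely singular continuous.
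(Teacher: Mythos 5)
Your setup is on the right track: you correctly identify that the generic-case mechanism (subconvergence of $\hat{\underline{s}}^n\vk$ to $0$ or $1$) is unavailable for $\vk\in\cF_N$, and that the uniform lower bound $h_n(x)\ge C\cdot I$ from Lemma \ref{l:21may-1} is the essential new input. However, your ``closing the loop'' step is a genuine gap, not a deferred detail. Propagating the atom to a weak-$*$ limit $\Sigma_{\vk'}$ with $\vk'\in\cF_{N+1}$ yields no contradiction: $\vk'$ is precisely an element of the class whose atom-freeness is the content of the theorem, so the argument is circular. None of the candidate mechanisms you list for breaking the cycle works as stated: an ``infinite descent'' needs a quantity that strictly decreases, the normalization $\int\Sigma_{\hat{\underline{s}}^n\vk}(dx)=I$ is compatible with every phase in the orbit closure carrying an atom of fixed size, and expansion of $T$ on $\sE$ says nothing by itself about how spectral mass redistributes.

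The paper's actual proof never passes to limits in $\bbZ_2$. It applies the transport identity for a \emph{single} step with approximate $\delta$-functions at the atom itself, and exploits that $T$ is two-to-one: the atom of $\Sigma_{\hat{\underline{s}}^{n+1}\vk}$ at $T^{\circ(n+1)}(x_0)$ collects mass from \emph{both} preimages $\pm T^{\circ n}(x_0)$. Since all diagonal Jacobi coefficients vanish, $\tr\Sigma_{\hat{\underline{s}}^n\vk}$ is an even measure, so the contribution from $-T^{\circ n}(x_0)$ is at least $(c_-/c_+)$ times the one from $+T^{\circ n}(x_0)$, where $c_\pm$ are the two-sided bounds on $h_n$ (the lower bound $c_-$ being exactly what Lemma \ref{l:21may-1} supplies). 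This gives
\[
\tr\bigl(h_{n+1}(T^{\circ(n+1)}(x_0))\,\Sigma_{\hat{\underline{s}}^{n+1}\vk}(\{T^{\circ(n+1)}(x_0)\})\bigr)\ \ge\ \Bigl(1+\tfrac{c_-}{c_+}\Bigr)\,\tr\bigl(h_{n}(T^{\circ n}(x_0))\,\Sigma_{\hat{\underline{s}}^{n}\vk}(\{T^{\circ n}(x_0)\})\bigr),
\]
so an initial atom forces geometric growth of a quantity that is uniformly bounded above (by the upper bound on $h_n$ and the normalization of the measures) --- a contradiction. The doubling coming from the two preimages, combined with the evenness of the spectral trace, is the key idea missing from your proposal.
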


\begin{proof}
We have
$$
\int g(T(x))\tr\left(h_n(x) \Sigma_{\hus^n\vk}(dx)
\right)=
\int \tr\left(g(T(x))h_n(x) (\cM^*_{\hus^n\vk}\Sigma_{\hus^{n+1}\vk})(dx)
\right)
$$
$$
=
\int \tr\left((\cM_{\hus^n\vk}g(T(x))h_n(x)) \Sigma_{\hus^{n+1}\vk})(dx)
\right)=
\int g(x)\tr\left(h_{n+1}(x) \Sigma_{\hus^{n+1}\vk})(dx)
\right)
$$
Let $x_0 \in \sE$. Using a sequence of approximate $\delta$-functions $g_j(x) = (1 - j \lvert x - T^{\circ(n+1)}(x_0) \rvert)_+$ centered at $T^{\circ(n+1)}(x_0)$ and taking $j\to\infty$, we get
$$
\tr\left(h_{n+1}(T^{\circ(n+1)}(x_0)) \Sigma_{\hus^{n+1}\vk}(\{T^{\circ(n+1)}(x_0)\})\right)=
$$
$$
\tr\left(h_{n}(T^{\circ n}(x_0)) \Sigma_{\hus^{n}\vk}(\{T^{\circ n}(x_0)\})\right)+
\tr\left(h_{n}(-T^{\circ n}(x_0)) \Sigma_{\hus^{n}\vk}(\{-T^{\circ n}(x_0)\})\right)
$$
On the other hand since
$$
c_-\cdot I\le h_n(x)\le c_+\cdot I,\quad x\in[-\xi,\xi],
$$
we have
$$
\tr\left(h_{n}(-y_0) \Sigma_{\hus^{n}\vk}(\{-y_0\})\right)\ge c_-\tr\Sigma_{\hus^{n}\vk}(\{-y_0\})=
$$
$$
 c_-\tr\Sigma_{\hus^{n}\vk}(\{y_0\})\ge\frac {c_-}{c_+}
 \tr\left(h_{n}(y_0) \Sigma_{\hus^{n}\vk}(\{y_0\})\right).
$$
(in this step we used that the trace of the spectral matrix measure is even, since all diagonal Jacobi coefficients are zero). In combination we get
$$
\tr\left(h_{n+1}(T^{\circ(n+1)}(x_0)) \Sigma_{\hus^{n+1}\vk}(\{T^{\circ(n+1)}(x_0)\})\right)\ge
$$
$$
(1+\delta) \tr\left(h_{n}(T^{\circ n}(x_0)) \Sigma_{\hus^{n}\vk}(\{T^{\circ n}(x_0)\})\right)
$$
with $\delta=c_-/c_+$. If $\tr\,\Sigma_\vk(\{x_0\})>0$, then $\tr\,\left(h_0(x_0)\Sigma_\vk(\{x_0\})\right)>0$ and the sequence
$ \tr\left(h_{n}(T^{\circ n}(x_0)) \Sigma_{\hus^{n}\vk}(\{T^{\circ n}(x_0)\})\right)$ is unbounded. On the other hand, the measures $\Sigma_{\hus^n\vk}$ and functions $h_n(x)$ are uniformly bounded.
\end{proof}
%%%%%%%%%%%%%%%%%%%%%

\begin{proof}[Proof of Theorem~\ref{mt}]
Since $\bbZ_2 = \cG_\delta \cup \cF_\sigma$ and $\cF_\sigma = \cup_N \cF_N$, this follows from Theorems~\ref{thm:Gdelta} and \ref{thm:Fsigma}.
\end{proof}

%%%%%%%%%%%%%%%

\end{document}